\newtheorem{thm}{Theorem}[section]
\newtheorem{lem}[thm]{Lemma}
\newtheorem{prop}[thm]{Proposition}
\newtheorem{cor}[thm]{Corollary}
\theoremstyle{definition}
\newtheorem{defn}[thm]{Definition}
\newtheorem{rem}[thm]{Remark}
\numberwithin{equation}{section} 
\numberwithin{figure}{section}
\numberwithin{table}{section}
\newcommand{\absconv}{\mathop{\mathrm{absconv}}}
\begin{document}

\title{Chaining, Interpolation, and Convexity}
\author{Ramon van Handel}
\address{Sherrerd Hall Room 227, Princeton University, Princeton, NJ 
08544, USA}
\thanks{Supported in part by NSF grant
CAREER-DMS-1148711 and by the ARO through PECASE award 
W911NF-14-1-0094.}
\email{rvan@princeton.edu}

\begin{abstract}
We show that classical chaining bounds on the suprema of random processes 
in terms of entropy numbers can be systematically improved when the 
underlying set is convex: the entropy numbers need not be computed for the 
entire set, but only for certain ``thin'' subsets.  This phenomenon arises 
from the observation that real interpolation can be used as a natural 
chaining mechanism. Unlike the general form of Talagrand's generic 
chaining method, which is sharp but often difficult to use, the resulting 
bounds involve only entropy numbers but are nonetheless sharp in many 
situations in which classical entropy bounds are suboptimal.  Such bounds 
are readily amenable to explicit computations in specific examples, and we 
discover some old and new geometric principles for the control of chaining 
functionals as special cases. \end{abstract}

\subjclass[2000]{60B11, 60G15, 41A46, 46B20, 46B70}

\keywords{Generic chaining; majorizing measures; entropy numbers; real 
interpolation; suprema of random processes}

\maketitle

\thispagestyle{empty}

\section{Introduction}

A remarkable achievement of modern probability theory is the development 
of sharp connections between the boundedness of random processes and the 
geometry of the underlying index set.  Perhaps the most fundamental 
result in this direction is the characterization of boundedness of 
Gaussian processes due to Talagrand.

\begin{thm}[\cite{Tal14}]
\label{thm:mm}
Let $(X_t)_{t\in T}$ be a centered Gaussian process and denote by
$d(t,s) = (\mathbf{E}|X_t-X_s|^2)^{1/2}$ the associated natural metric on 
$T$.  Then
$$
	\mathbf{E}\bigg[\sup_{t\in T}X_t\bigg]
	\asymp
	\gamma_2(T) :=
	\inf\sup_{t\in T}\sum_{n\ge 0}2^{n/2}d(t,T_n),
$$
where the infimum is taken over all sequences of sets $T_n$
with cardinality $|T_n|<2^{2^n}$.
\end{thm}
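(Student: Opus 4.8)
The plan is to prove the equivalence by establishing the two bounds $\mathbf{E}[\sup_{t\in T}X_t]\lesssim\gamma_2(T)$ and $\gamma_2(T)\lesssim\mathbf{E}[\sup_{t\in T}X_t]$ separately, with universal constants. I would first reduce to finite index sets $T$, so that all suprema are genuine maxima and no measurability question arises; the general statement then follows by a standard approximation argument once the constants are seen to be independent of $|T|$. It is also convenient to work with the equivalent formulation of $\gamma_2$ in terms of increasing sequences of partitions $\mathcal{A}_0,\mathcal{A}_1,\dots$ of $T$ with $|\mathcal{A}_n|\le 2^{2^n}$, namely $\gamma_2(T)\asymp\inf\sup_t\sum_{n\ge 0}2^{n/2}\,\mathrm{diam}\,A_n(t)$, where $A_n(t)\in\mathcal{A}_n$ is the cell containing $t$ (one passes between the two formulations by replacing each cell by a representative point, and each net by its Voronoi partition).

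\textbf{Upper bound.} Here I would run the generic chaining argument, which uses only that the increments are subgaussian for $d$. Fix a near-optimal sequence $(T_n)$ with $T_0=\{t_0\}$, and let $\pi_n(t)$ be a nearest point of $T_n$ to $t$; telescoping gives $X_t-X_{t_0}=\sum_{n\ge 1}\big(X_{\pi_n(t)}-X_{\pi_{n-1}(t)}\big)$, where the $n$-th increment takes at most $|T_n||T_{n-1}|\le 2^{2^{n+1}}$ values and has length at most $d(t,T_{n-1})+d(t,T_n)$. A union bound against the Gaussian tail controls the level-$n$ increments by a constant times $2^{n/2}$ times this length, uniformly in $t$; summing over $n$ and integrating the resulting tail bound yields $\mathbf{E}[\sup_t(X_t-X_{t_0})]\lesssim\sup_t\sum_{n}2^{n/2}d(t,T_n)$, and the infimum over $(T_n)$ gives the claim.

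\textbf{Lower bound.} The strategy is to build an admissible partition sequence directly from the set functional $g(A):=\mathbf{E}[\sup_{t\in A}X_t]$, which satisfies $g(\{s\})=0$ and is monotone under inclusion. Two further properties do the work. First, \emph{Sudakov minoration}: if $A$ contains $m$ points that are pairwise $a$-separated in $d$, then $g(A)\ge c\,a\sqrt{\log m}$, proved via Slepian's comparison inequality against i.i.d.\ Gaussians. Second, a \emph{growth inequality} obtained by combining Sudakov minoration with the Gaussian concentration inequality: if $H_1,\dots,H_m$ lie in a common ball of radius $r$ and are pairwise $a$-separated with $a\gg r$, then
$$
	g\Big(\bigcup_{l\le m}H_l\Big)\ \ge\ c\,a\sqrt{\log m}\ +\ \min_{l\le m}g(H_l);
$$
the essential point is that the ``between-cluster'' gain $a\sqrt{\log m}$ genuinely adds onto the worst ``within-cluster'' supremum, because concentration pins each $\sup_{H_l}X$ near $g(H_l)$ up to fluctuations of order $r\ll a\sqrt{\log m}$.

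\textbf{Construction and main obstacle.} Granting these, one organizes $T$ as a tree of subsets with root $T$: a node $A$ at level $n$ is split by covering it with balls of a suitable radius and grouping them into children so that each child $A'$ either has $\mathrm{diam}\,A'$ much smaller than $\mathrm{diam}\,A$ (so its future contributions to $\sum 2^{n/2}\mathrm{diam}$ are cheap) or is separated enough from its siblings that the growth inequality forces a definite drop in $g$ from $A$ to $A'$, all while keeping the number of children within the budget that holds the level-$n$ partition below $2^{2^n}$ cells. Since $g$ starts at $g(T)$, ends at $0$ on singletons, and only decreases along a branch, a telescoping argument along each branch then bounds $\sum_n 2^{n/2}\,\mathrm{diam}\,A_n(t)$ by a constant multiple of $g(T)$, i.e.\ $\gamma_2(T)\lesssim\mathbf{E}[\sup_t X_t]$. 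The hard part --- and precisely where the cruder Dudley entropy integral can be suboptimal --- is this recursive construction: one must choose the radii and the number of children at each node so that the entropy budget $2^{2^n}$ and the summability of $\sum_n 2^{n/2}\mathrm{diam}\,A_n(t)$ are respected simultaneously, while every such term is charged against a true decrement of $g$ along the branch. Making these choices consistently, which is exactly what the refined growth inequality (rather than bare Sudakov minoration) is designed to enable, is the technical heart of the theorem.
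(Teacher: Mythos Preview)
The paper does not actually prove Theorem~\ref{thm:mm}: it is quoted as a known result from \cite{Tal14}, and the only discussion of its proof is a brief paragraph in the introduction describing Talagrand's growth functional machinery (greedy partitioning driven by the functional $G(A)=\mathbf{E}[\sup_{t\in A}X_t]$, whose success rests on a single ``growth condition''). Your outline is precisely that strategy --- generic chaining for the upper bound, and for the lower bound Sudakov minoration plus Gaussian concentration combined into a growth inequality that drives a recursive partitioning scheme --- so in spirit your proposal agrees with what the paper points to.

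One point to clean up: your statement of the growth inequality is garbled. You write that $H_1,\dots,H_m$ ``lie in a common ball of radius $r$'' and are pairwise $a$-separated with $a\gg r$; but if they all sit in a single ball of radius $r$ they cannot be $a$-separated for $a>2r$. The correct hypothesis is that the $H_l$ are contained in balls $B(t_l,b)$ around $a$-separated centers $t_1,\dots,t_m$, with $b$ a fixed fraction of $a$ (e.g.\ $b\le a/4$); concentration then controls the fluctuation of each $\sup_{t\in H_l}X_t$ by the small radius $b$, which is what lets the Sudakov gain $c\,a\sqrt{\log m}$ add onto $\min_l g(H_l)$. With this correction your sketch is a faithful summary of the standard proof.
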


The quantity $\gamma_2(T)$ captures precisely what aspect of the geometry 
of the metric space $(T,d)$ controls the suprema of Gaussian processes: it 
quantifies the degree to which $T$ can be approximated by a sequence of 
increasingly fine nets $T_n$. While we quote this particular result for 
concreteness, the structure that is expressed by Theorem \ref{thm:mm}, 
called the generic chaining, extends far beyond the theory of Gaussian 
processes and has a substantial impact on various problems in probability, 
functional analysis, statistics, and theoretical computer science.  An 
extensive development of this theory and its implications can be found in 
\cite{Tal14}.

Theorem \ref{thm:mm} provides a powerful general principle for the study 
of the suprema of random processes.  However, when presented with any 
specific situation, it often proves to be remarkably difficult to control 
$\gamma_2(T)$ efficiently.  Theorem \ref{thm:mm} can only give sharp 
results if one is able to construct a nearly optimal sequence of nets 
$T_n$, a task that is significantly complicated by the multiscale nature 
of $\gamma_2(T)$.  The aim of this paper is to exhibit some surprisingly 
elementary principles that make it possible to obtain sharp control of 
$\gamma_2(T)$ in various interesting examples, and that shed new light on 
the underlying geometric phenomena.

There are essentially two general approaches that have been used to 
control $\gamma_2(T)$.  The simplest and by far the most useful approach 
is obtained by bringing the supremum over $t\in T$ inside the sum in the 
definition of $\gamma_2(T)$.  This yields
$$
	\gamma_2(T) \le \sum_{n\ge 0} 2^{n/2} e_n(T),
$$
where the entropy number $e_n(T)$ is defined as the smallest 
$\varepsilon>0$ such that there is an $\varepsilon$-net in $T$ of 
cardinality less than $2^{2^n}$.  This bound, due to Dudley \cite{Dud67}, 
long predates Theorem \ref{thm:mm} and has found widespread use.  Its 
utility stems from the fact that controlling entropy numbers only requires 
us to approximate the set $T$ at a single scale, for which numerous 
methods are available; see, e.g., \cite{KT59,ET96,AGM15}. Unfortunately, 
Dudley's bound can fail to be sharp even in the simplest examples, such as 
ellipsoids in Hilbert space.  In fact, the supremum of a random process on 
$T$ cannot in general be understood in terms of the entropy numbers of 
$T$: one can easily construct two such sets with comparable entropy 
numbers on which a Gaussian process behaves very differently \cite{Sud69}. 
It is therefore a crucial feature of Theorem \ref{thm:mm} that the use of 
entropy numbers is replaced by a genuinely multiscale form of 
approximation.  The construction of such a multiscale approximation in 
any given situation is however a highly nontrivial task.

The main approach that has been developed for the latter purpose is 
Talagrand's growth functional machinery \cite{Tal14} that forms the core 
of the proof of Theorem \ref{thm:mm}.  To show that $\gamma_2(T)$ is upper 
bounded by the expected supremum of the Gaussian process, the proof of 
Theorem \ref{thm:mm} constructs nets $T_n$ by means of a greedy 
partitioning scheme that uses the Gaussian process itself $G(A) := 
\mathbf{E}[\sup_{t\in A}X_t]$ as an objective function. It turns out that 
the success of this proof relies on the properties of Gaussian processes 
only through the validity of a single ``growth condition'' of the 
functional $G$.  If one can design another functional $F$ that mimics this 
property of Gaussian processes, then the same proof also yields an upper 
bound on $\gamma_2(T)$ in terms of $F(T)$.  An important example of such a 
construction is the proof that $\gamma_2(T)$ is strictly smaller than 
Dudley's bound when $T$ is a $q$-convex body \cite[\S 4.1]{Tal14}.  It is 
generally far from obvious, however, how a functional $F$ can be designed, 
and successful application of this approach requires considerable 
ingenuity.

In this paper, we develop a new approach that is intermediate between 
these two extremes.  The central insight of this paper is that it is 
possible to improve systematically on Dudley's bound without giving up the 
formulation in terms of entropy numbers.  Of course, as was noted above, 
we cannot expect to improve on Dudley's bound in a general setting in 
terms of the entropy numbers of $T$ itself.  Instead, we will show that 
when $T$ 
is a convex set, the entropy numbers $e_n(T)$ in Dudley's bound can be 
replaced by the entropy numbers of certain ``thin'' subsets that can be 
substantially smaller than $T$. (The convexity assumption is not essential 
for our approach, but leads to a cleaner statement of the results.)

To illustrate this idea, let us begin by stating a useful form of such a 
result. Let $(X,\|\cdot\|)$ be a Banach space, and let $B\subset X$ be a 
symmetric compact convex set.  We denote by $\|\cdot\|_B$ the gauge of 
$B$, and by $\|\cdot\|_B^*$ and $\|\cdot\|^*$ the dual norms on $X^*$. 
In this setting, we will always choose the distance $d$ in the 
definitions of $\gamma_2(B)$ and $e_n(B)$ to be the one generated by the 
norm $d(x,y):=\|x-y\|$.

\begin{thm}
\label{thm:main}
Let $B\subset (X,\|\cdot\|)$ be a symmetric compact convex set, and
define
$$
	B_t := \{ y\in B : \exists\,z\in X^*\mbox{ such that }
	\langle z,y\rangle = \|y\|_B,~
	\|z\|_B^*\le 1,~\|z\|^*\le t\}.
$$
Then we have for any $a>0$
$$
	\gamma_2(B) \lesssim \frac{1}{a} + 
	\sum_{n\ge 0} 2^{n/2} e_n(B_{a2^{n/2}}).
$$
\end{thm}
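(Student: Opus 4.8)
\emph{Plan.} The plan is to take the sets $T_n$ in the definition of $\gamma_2(B)$ to be $\varepsilon$-nets of the thin sets $B_{a2^{n/2}}$, and to control the resulting chaining error by exhibiting the $B_t$ as sublevel sets of a $K$-functional, whose concavity is exactly the ``interpolation as a chaining mechanism''. Throughout, let $X_B$ denote the linear span of $B$ normed by the gauge $\|\cdot\|_B$, and introduce
$$
	\varphi_t(s):=\inf_{t=b+w}\big(\|b\|_B+s\|w\|\big),
$$
the $K$-functional of the couple $(X_B,X)$ evaluated at $t$.

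\emph{Step 1: reformulating $B_t$.} I would first prove the identity $B_s=\{y\in B:\varphi_y(s)=\|y\|_B\}$. The constraint set $Z_s:=\{z:\|z\|_B^*\le1,\ \|z\|^*\le s\}$ is the intersection of the polar of $B$ with $s$ times the dual unit ball of $X$, so by the bipolar theorem its support function is the gauge of the closed convex hull of $B\cup\tfrac1sU$ ($U$ the unit ball of $X$), and an elementary gauge computation identifies this with $\varphi_y(s)$. Since every $z\in Z_s$ satisfies $\langle z,y\rangle\le\|y\|_B$ while the supremum over $Z_s$ is attained (weak-$*$ compactness of $Z_s$), the existence of $z\in Z_s$ with $\langle z,y\rangle=\|y\|_B$ is equivalent to $\varphi_y(s)=\|y\|_B$. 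In words: a point of $B$ admits a cheap supporting functional precisely when no splitting of it lowers its gauge at rate $s$.

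\emph{Step 2: the key estimate.} Fix $t\in B$. By compactness of $B$ and lower semicontinuity of the gauge, the infimum in $\varphi_t(s)$ is attained, say at $t=b_s+w_s$ with $\|b_s\|_B\le\|t\|_B\le1$; moreover if some splitting $b_s=b'+w'$ had $\|b'\|_B+s\|w'\|<\|b_s\|_B$, then $t=b'+(w'+w_s)$ would beat $\varphi_t(s)$, a contradiction. Hence $\varphi_{b_s}(s)=\|b_s\|_B$, i.e.\ $b_s\in B_s$, so $d(t,B_s)\le\|w_s\|$. The crucial point is that $\varphi_t$ is concave and nondecreasing with $\varphi_t(0)=0$, $\varphi_t(s)\le\|t\|_B$, and that $\|w_s\|$ is a supergradient of $\varphi_t$ at $s$ (the affine map $s'\mapsto\|b_s\|_B+s'\|w_s\|$ lies above $\varphi_t$ and is tangent at $s$). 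Evaluating along $s_n:=a2^{n/2}$, the supergradient inequality gives $\|w_{s_n}\|(s_n-s_{n-1})\le\varphi_t(s_n)-\varphi_t(s_{n-1})$; since $s_n-s_{n-1}=(1-2^{-1/2})s_n$, the telescoping series yields $\sum_{n\ge0}s_n\|w_{s_n}\|\le C\,\varphi_t(\infty)\le C\|t\|_B\le C$ for an absolute constant $C$. Dividing by $a$,
$$
	\sum_{n\ge0}2^{n/2}d(t,B_{s_n})\;\le\;\sum_{n\ge0}2^{n/2}\|w_{s_n}\|
	\;=\;\frac1a\sum_{n\ge0}s_n\|w_{s_n}\|\;\lesssim\;\frac1a ,
$$
uniformly over $t\in B$.

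\emph{Step 3: assembling the chain, and the main obstacle.} Choose $T_n\subset B_{s_n}$ an $e_n(B_{s_n})$-net with $|T_n|<2^{2^n}$; such a net exists since $B_{s_n}$ is closed in $B$ (its complement in $B$ is open, as $y\mapsto\varphi_y(s_n)$ is continuous while $\|y\|_B$ is lower semicontinuous) and hence compact. Then $d(t,T_n)\le d(t,B_{s_n})+e_n(B_{s_n})$, so
\begin{align*}
	\gamma_2(B)&\le\sup_{t\in B}\sum_{n\ge0}2^{n/2}d(t,T_n)
	\le\sup_{t\in B}\sum_{n\ge0}2^{n/2}d(t,B_{s_n})+\sum_{n\ge0}2^{n/2}e_n(B_{a2^{n/2}})\\
	&\lesssim\frac1a+\sum_{n\ge0}2^{n/2}e_n(B_{a2^{n/2}}).
\end{align*}
The one genuinely nontrivial point is the estimate $\sum_n2^{n/2}d(t,B_{s_n})\lesssim1/a$ of Step~2: the naive bound $d(t,B_{s_n})\le\varphi_t(s_n)/s_n\le2^{-n/2}/a$ makes this series diverge, so one really needs the concavity of the $K$-functional — equivalently the inequality $\int_0^\infty d(t,B_s)\,ds\le\|t\|_B$ — to collapse the approximation errors at all scales into a single $O(1/a)$ term. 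The remaining ingredients (bipolarity for Step~1, compactness for the attained infima and for the existence of the nets, and the standard passage from a sequence of nets to $\gamma_2$) are routine.
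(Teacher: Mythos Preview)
Your proposal is correct and follows essentially the same route as the paper: the paper proves Theorem~\ref{thm:main} by combining Proposition~\ref{prop:superdudley} (the $K$-functional concavity/supergradient argument that gives $\sum_n 2^{n/2}d(t,B_{s_n})\lesssim 1/a$) with Proposition~\ref{prop:dual} (the duality identifying $B_t$ with $\{y\in B:K(t,y)=\|y\|_B\}$), and your Steps~1--3 reproduce exactly these ingredients in the same order. The only cosmetic difference is that in Step~2 you verify $b_s\in B_s$ by a direct primal argument (a strictly better splitting of $b_s$ would yield a strictly better splitting of $t$), whereas the paper's Step~4 of Proposition~\ref{prop:dual} does this via the dual witness $z$; both are short and equivalent.
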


The bound of Theorem \ref{thm:main} proves to be sharp
in many situations in which Dudley's bound is suboptimal, and often 
provides a simple explanation for why this is the case.  At the same 
time, Theorem \ref{thm:mm} is typically no more difficult to apply than 
Dudley's bound, as the ``thin'' subsets $B_t\subseteq B$ that appear in 
this bound can be found in quite explicit form.  For example, if $B$ is a 
smooth symmetric convex body in $\mathbb{R}^d$, then it is a classical 
fact that $\nabla\|x\|_B$ is the unique norming functional for the norm 
$\|\cdot\|_B$ at the point $x$, so that we can simply write
$$
	B_t = \{y\in B: \|\nabla\|y\|_B\|^* \le t\}.
$$
Such expressions are readily amenable to explicit computations.

One of the nice features of Theorem \ref{thm:main} is that the phenomenon 
that it describes arises in a completely elementary fashion.  To 
understand its origin, let us sketch the simple idea behind the 
proof.  The basic challenge in controlling $\gamma_2(B)$ is to approximate 
the unit ball of the norm $\|\cdot\|_B$ in terms of another norm 
$\|\cdot\|$.  It proves to be useful to connect these two norms using an 
idea that is inspired by real interpolation of Banach spaces \cite{BS88}.   
To this end, define Peetre's $K$-functional
$$
	K(t,x) := \inf_y\{\|y\|_B + t\|x-y\|\} =
	\|\pi_t(x)\|_B + t\|x-\pi_t(x)\|,
$$
where $\pi_t(x)$ is any minimizer in the definition of $K(t,x)$ 
(assume for simplicity that we work in a finite-dimensional Banach space 
to avoid trivial technicalities).  It is easily seen that 
$\lim_{t\to\infty}K(t,x)=\|x\|_B$, $K(0,x)=0$, and
$\frac{d}{dt}K(t,x)=\|x-\pi_t(x)\|$ (the latter follows by observing that
$\|x-\pi_t(x)\|$ is a supergradient of the concave
function $t\mapsto K(t,x)$, so it must equal $\frac{d}{dt}K(t,x)$ a.e.; 
see Proposition \ref{prop:superdudley} below.)
We therefore obtain by the fundamental theorem of calculus
$$
	\|x\|_B =
	\int_0^\infty \|x-\pi_t(x)\|\,dt \asymp
	\sum_{n\ge 0} 2^{n/2}\|x-\pi_{2^{n/2}}(x)\|,
$$
where the last step follows from a Riemann sum approximation of 
the integral.  This leads immediately to the following observation:
if we define the sets
$$
	B_t := \{\pi_t(x):x\in B\},
$$
then we have shown that
$$
	\sup_{x\in B}\sum_{n\ge 0}2^{n/2}d(x,B_{2^{n/2}}) \lesssim 1.
$$
In other words, we see that a natural chaining mechanism is in fact built 
into the real interpolation method: we automatically generate a multiscale 
approximation of $B$ in terms of the sets $B_t$.  In order to 
bound $\gamma_2(B)$, it remains to choose a finite net with the 
appropriate cardinality inside each of the sets $B_t$.  (While it may not 
be immediately obvious, the definition of $B_t$ given in Theorem 
\ref{thm:main} is none other than the dual formulation of the 
definition of $B_t$ as a set of minimizers.)

It should be clear at this point that convexity is not essential in the 
construction using real interpolation: convexity only enters the proof of 
Theorem \ref{thm:main} in order to obtain the convenient formulation of 
the sets $B_t$.  In section \ref{sec:superdudley}, we first prove a 
general form of Theorem \ref{thm:main} that is applicable in any metric 
space; we also formulate the results for more general 
$\gamma_p$-functionals that appear when the generic chaining method is 
applied to non-Gaussian processes.  We then specialize to the convex 
setting and derive the dual formulation of $B_t$. In section \ref{sec:ex}, 
we illustrate the power of Theorem \ref{thm:main} in a number of explicit 
examples.  We also illustrate by means of an example that Theorem 
\ref{thm:main} does not always give sharp results.

Theorem \ref{thm:main} improves on Dudley's bound by replacing the entropy 
numbers of $B$ by the entropy numbers of the smaller sets $B_t$.  A rather 
different improvement arises when $B$ is $q$-convex, for which 
Talagrand shows that \cite[\S 4.1]{Tal14}
$$
        \gamma_2(B) \lesssim
        \Bigg[
        \sum_{n\ge 0} \big(2^{n/2} e_n(B)\big)^{q/(q-1)}
        \Bigg]^{(q-1)/q}.
$$
This bound involves only the entropy numbers of the set $B$ itself, and 
appears at first sight to be quite different in nature than Theorem 
\ref{thm:main}. Nonetheless, we show in section \ref{sec:geom} that this 
fundamental result is a direct consequence of Theorem 
\ref{thm:main}.  Roughly speaking, we will see that the $q$-convexity 
assumption forces the sets $B_t$ to be much smaller than the original set 
$B$ in the sense that $e_n(B_t) \lessapprox t^{1/(q-1)}e_n(B)^{q/(q-1)}$. 
In fact, it turns out there is nothing particularly special about  
uniform convexity: Talagrand's result is a special case of a 
more general geometric phenomenon that will be developed in section 
\ref{sec:geom}.  As another illustration of this phenomenon, we will show 
that Talagrand's bound for $q$-convex bodies holds verbatim for 
$\ell_q$-balls in Banach spaces with an unconditional basis for every 
$1<q<\infty$.  Note that such sets are only $2$-convex rather than 
$q$-convex when $1<q<2$, so that the behavior of $\ell_q$-balls
is evidently not explained by uniform convexity.

The connection between interpolation and generic chaining appears in 
hindsight to be entirely natural.  Many generic chaining constructions 
(that appear in \cite{Tal14,Tal96}, for example) have a flavor of 
interpolation, and even the multiscale notion of approximation that is 
intrinsic to the definition of $\gamma_2(T)$ has appeared 
independently in interpolation theory in the study of approximation spaces 
\cite{Pie81,DP88,PS72}.  To the best of the author's knowledge, however, 
the results of this paper are the first to explicitly develop this 
connection. It would be interesting to understand whether broader
interactions exist between these areas of probability and analysis.

\section{Chaining, Interpolation, and Convexity}
\label{sec:superdudley}

The aim of this section is to develop the basic connections between 
chaining, interpolation, and convexity that lie at heart of this paper. In 
section \ref{sec:metric}, we develop an abstract chaining principle that 
holds in any metric space.  In section \ref{sec:convex}, we specialize to 
the convex setting and complete the proof of Theorem \ref{thm:main}.

\subsection{Chaining and interpolation}
\label{sec:metric}

In this section, let $(X,d)$ be any metric space. We begin by defining 
formally the notions of entropy numbers and Talagrand's 
$\gamma_p$-functionals. The case $p=2$ arises in the context of Gaussian 
processes together with the associated natural metric, cf.\ Theorem 
\ref{thm:mm}; however, other values of $p$ and more general metrics can 
arise for other random processes \cite{Tal14}.

\begin{defn}
\label{defn:talg}
For any $A\subseteq X$ and $n\ge 0$, define the entropy number
$$
	e_n(A) := \inf_{|\tilde A|<2^{2^n}}\sup_{x\in A}
	d(x,\tilde A),
$$
and define for $p>0$ the $\gamma_p$-functional
$$
	\gamma_p(A) := 
	\inf_{|\tilde A_n|<2^{2^n}}\sup_{x\in A}\sum_{n\ge 0}
	2^{n/p}d(x,\tilde A_n).
$$
(The approximating sets $\tilde A_n\subseteq X$ are not necessarily 
subsets of $A$.)
\end{defn}

Fix a set $A\subseteq X$ for the remainder of this section.
To measure the size of $A$, we introduce a penalty 
function $f:X\to\mathbb{R}_+\cup\{+\infty\}$ that may in principle be 
chosen arbitrarily.  Consider the corresponding optimization problem
$$
	K(t,x) := \inf_{y\in X}\{f(y) + td(x,y)\}
$$
for every $t\ge 0$ and $x\in A$.  We will assume for simplicity that the 
infimum in this optimization problem is attained for every $t\ge 0$ and 
$x\in A$, and denote by $\pi_t(x)$ any choice of minimizer in the 
definition of $K(t,x)$.  (It is a trivial exercise to extend our results 
to the setting where $\pi_t(x)$ is a near-minimizer, but such an extension 
will not be needed in the sequel.)  We now define for every $t\ge 0$ the 
set
$$
	A_t := \{\pi_t(x):x\in A\}.
$$

\begin{rem}
In the present formulation, $A_t$ is not necessarily a subset of $A$.  
However, it is natural to choose a penalty function $f$ such that
$A=\{x:f(x)\le 1\}$, in which case evidently $A_t\subseteq A$ (because
$f(\pi_t(x)) \le K(t,x) \le f(x)$).
\end{rem}

The following result lies at the heart of this paper.  In the sequel, we 
write $a\lesssim b$ if $a\le Cb$ for a universal constant $C$, and 
$a\asymp b$ if $a\lesssim b$ and $b\lesssim a$.  We indicate explicitly 
when the universal constant depends on some parameter in the problem.

\begin{prop}
\label{prop:superdudley}
In the setting of this section, we have for every $a>0$
$$
	\gamma_p(A) \lesssim
	\frac{1}{a}\sup_{x\in A}f(x) +
	\sum_{n\ge 0} 2^{n/p} e_n(A_{a2^{n/p}}),
$$
where the universal constant depends on $p$ only.
\end{prop}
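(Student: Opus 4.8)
The plan is to build the approximating sets $\tilde A_n$ in Definition \ref{defn:talg} directly out of the interpolation sets $A_t$ evaluated at the dyadic scales $t_n := a2^{n/p}$. First I would record the basic properties of the $K$-functional that the excerpt already sketched: for fixed $x\in A$, the map $t\mapsto K(t,x)$ is concave (being an infimum of affine functions of $t$), nondecreasing, satisfies $K(0,x)=0$ (take $y=x$ if $f(x)$ finite, or argue via $td(x,y)\to 0$), and $K(t,x)\le f(x)$ for all $t$; moreover $d(x,\pi_t(x))$ is a supergradient of $K(\cdot,x)$ at $t$, hence $\frac{d}{dt}K(t,x)=d(x,\pi_t(x))$ at all but countably many $t$, and $t\mapsto d(x,\pi_t(x))$ is nonincreasing. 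The key quantitative consequence, obtained by the fundamental theorem of calculus for the concave function $K(\cdot,x)$ together with a Riemann-sum comparison of $\int_0^\infty d(x,\pi_t(x))\,dt$ against $\sum_n 2^{n/p} d(x,\pi_{a2^{n/p}}(x))$ at ratio-$2^{1/p}$ dyadic points, is the pointwise bound
$$
	\sum_{n\ge 0} 2^{n/p}\, d\big(x,\pi_{a2^{n/p}}(x)\big)
	\;\lesssim\; \frac1a\, f(x) ,
$$
with the implied constant depending only on $p$; here one also needs to absorb the "tail'' contribution near $t=0$, which is controlled because $K(t,x)\le f(x)$ forces $d(x,\pi_t(x))\le f(x)/t$, making the sum over small scales geometrically convergent.

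Next, with this pointwise estimate in hand, I would pass to finite nets. Inside each set $A_{a2^{n/p}}$ choose, for every $n\ge 0$, a net $N_n\subseteq X$ with $|N_n|<2^{2^n}$ realizing (up to a factor $2$, say) the entropy number $e_n(A_{a2^{n/p}})$, i.e.\ $\sup_{w\in A_{a2^{n/p}}} d(w,N_n)\le 2\,e_n(A_{a2^{n/p}})$. Set $\tilde A_n := N_n$. Then for any $x\in A$, writing $w_n := \pi_{a2^{n/p}}(x)\in A_{a2^{n/p}}$, the triangle inequality gives
$$
	d(x,\tilde A_n)\;\le\; d(x,w_n) + d(w_n,N_n)\;\le\; d\big(x,\pi_{a2^{n/p}}(x)\big) + 2\,e_n\big(A_{a2^{n/p}}\big).
$$
Multiplying by $2^{n/p}$, summing over $n\ge 0$, and taking the supremum over $x\in A$, the first term is controlled by $\frac{C_p}{a}\sup_{x\in A}f(x)$ via the pointwise estimate above, and the second term contributes $2\sum_{n\ge 0}2^{n/p}e_n(A_{a2^{n/p}})$. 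By Definition \ref{defn:talg} this bounds $\gamma_p(A)$ and yields the claim, with a constant depending on $p$ only.

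The main obstacle is the first step: making the passage from the differential identity $\frac{d}{dt}K(t,x)=d(x,\pi_t(x))$ (valid only a.e., and only when the relevant minimizers exist) to the clean dyadic bound $\sum_n 2^{n/p}d(x,\pi_{a2^{n/p}}(x))\lesssim f(x)/a$ fully rigorous. One must handle the endpoint behavior as $t\downarrow 0$ (where $K(t,x)\to 0$ but $\pi_t(x)$ need not converge to $x$ in general), justify the Riemann-sum comparison using monotonicity of $t\mapsto d(x,\pi_t(x))$ rather than any continuity, and be careful that the concavity argument identifying the supergradient with the derivative is applied correctly; the factor-$2^{1/p}$ spacing of the sample points is what keeps the comparison constant dependent on $p$ alone. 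Everything after that — choosing nets, triangle inequality, summation — is routine.
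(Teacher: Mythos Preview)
Your proposal is correct and follows essentially the same route as the paper: establish the supergradient identity for $K(\cdot,x)$, use concavity and monotonicity of $t\mapsto d(x,\pi_t(x))$ to convert the integral bound $\int_0^\infty d(x,\pi_t(x))\,dt\le f(x)$ into the dyadic sum $\sum_n 2^{n/p}d(x,\pi_{a2^{n/p}}(x))\lesssim f(x)/a$, then pick nets in each $A_{a2^{n/p}}$ and apply the triangle inequality. Two small remarks: $K(0,x)=\inf_y f(y)$ need not vanish (but this is harmless since only $K(T,x)\le f(x)$ is used), and the ``tail near $t=0$'' you worry about does not actually arise---the $n=0$ term is handled directly by $\int_0^a d(x,\pi_t(x))\,dt\ge a\,d(x,\pi_a(x))$ from monotonicity, so no separate estimate via $d(x,\pi_t(x))\le f(x)/t$ is needed.
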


\begin{proof}
We can assume without loss of generality that $f$ is uniformly bounded 
on $A$.  Thus $0\le K(t,x)\le f(x)<\infty$ for every $x\in A$ and $t\ge 
0$.  Moreover, $t\mapsto K(t,x)$ is clearly a concave function for every 
$x\in A$.  We now use some basic facts about univariate concave functions 
\cite[Chapter I]{HUL93}.  First, we note that
\begin{align*}
	K(t,x)-K(s,x)
	&=
	\inf_{y\in X}\{f(y) + td(x,y)\} -
	f(\pi_s(x)) - sd(x,\pi_s(x)) \\
	&\le (t-s)d(x,\pi_s(x))
\end{align*}
for all $t,s\ge 0$, so that $d(x,\pi_s(x))$ is a supergradient of 
$t\mapsto K(t,x)$ at $t=s$.  As a bounded concave function is absolutely 
continuous, we obtain
$$
	K(T,x) = K(0,x) + \int_0^T d(x,\pi_t(x))\,dt
$$
for every $T\ge 0$ and $x\in A$.  In particular, we can estimate
$$
	\int_0^\infty d(x,\pi_t(x))\,dt \le f(x)
$$
for every $x\in A$.  We also recall that the derivative of a concave 
function is nonincreasing, so that we can discretize the integral as 
follows:
\begin{align*}
	f(x) &\ge \int_0^a d(x,\pi_t(x))\,dt +
	\sum_{n\ge 1}\int^{a2^{n/p}}_{a2^{(n-1)/p}}
	d(x,\pi_t(x))\,dt \\
	&\ge
	(1 - 2^{-1/p})\,a\sum_{n\ge 0} 2^{n/p}d(x,\pi_{a2^{n/p}}(x)),
\end{align*}
where we used that $t\mapsto d(x,\pi_t(x))$ is
nonincreasing in the last step.

It remains to discretize the minimizers $\pi_t(x)$.
By the definition of entropy numbers, we can choose for every $n\ge 0$ a 
set $\tilde A_n\subseteq X$ such that $|\tilde A_n|<2^{2^n}$ and
$$
	\sup_{x\in A_{a2^{n/p}}}d(x,\tilde A_n) 
	\le 2e_n(A_{a2^{n/p}}).
$$
We can therefore estimate
\begin{align*}
	\gamma_p(A) &\le
	\sup_{x\in A}\sum_{n\ge 0}2^{n/p}
	d(x,\tilde A_n)
	\\
	&\le
	\sup_{x\in A}\sum_{n\ge 0}2^{n/p}d(x,\pi_{a2^{n/p}}(x))
	+
	\sum_{n\ge 0}2^{n/p}
	\sup_{x\in A}
	d(\pi_{a2^{n/p}}(x),\tilde A_n)
	\\
	&\lesssim
	\frac{1}{a}\sup_{x\in A}f(x) +
	\sum_{n\ge 0}2^{n/p} e_n(A_{a2^{n/p}}),
\end{align*}
which completes the proof.
\end{proof}

\begin{rem}
Suppose we replace the penalty $f$ by an equivalent penalty $\tilde 
f\asymp f$. Then the first term in the bound of Proposition 
\ref{prop:superdudley} only changes by a universal constant, but the 
second term might change substantially as the definition of the sets $A_t$ 
is highly nonlinear.  This highlights the nontrivial nature of the choice 
of penalty.  Similarly, the bound of Theorem \ref{thm:main} could 
potentially give better results if we replace $B$ by an equivalent set 
$c\tilde B\subseteq B\subseteq C\tilde B$.  Note that the same 
phenomenon arises when applying the growth functional machinery of 
\cite{Tal14}: the growth condition is not preserved if we choose 
an equivalent functional. This appears to be an inherent difficulty that 
arises in the control of chaining functionals.  \end{rem}

\subsection{Convexity}
\label{sec:convex}

While Proposition \ref{prop:superdudley} provides a very general chaining 
principle in metric spaces, it is not immediately obvious how to apply 
this result in any given situation.  The problem is that the sets $A_t$ 
that appear in the previous section are defined implicitly as families of 
solutions to certain optimization problems; in the absence of a more 
explicit characterization, the computation of the entropy numbers 
$e_n(A_{a2^{n/p}})$ can be a challenging problem.  To address this 
problem, we specialize our results from this point onwards to the case 
where the set of interest is convex and where the penalty function is 
chosen to be the associated gauge. The convexity assumption makes it 
possible to obtain a dual formulation of the sets of optimizers that is 
readily amenable to explicit computations.  The advantages of this 
formulation will be amply illustrated in the following sections.

We now introduce the setting that will be used throughout the remainder of 
this paper.  Let $(X,\|\cdot\|)$ be a Banach space, and let $B\subset X$ 
be a symmetric compact convex set. The metric $d$ that appears in the 
definitions of the entropy numbers $e_n(B)$ and the functionals 
$\gamma_p(B)$ (cf.\ Definition \ref{defn:talg}) will always be chosen to 
be defined by the norm $d(x,y):=\|x-y\|$ on the underlying Banach space. 
The gauge (Minkowski functional) of $B$ will be denoted $\|\cdot\|_B$, that is,
$$
	\|x\|_B := \inf\{s\ge 0:x\in sB\}
$$
for $x\in X$.  Denote by $\|\cdot\|_B^*$ and $\|\cdot\|^*$ the 
associated dual gauge and norm, that is,
$$
	\|z\|^*_B := \sup_{\|x\|_B\le 1}\langle z,x\rangle
	= \sup_{x\in B}\langle z,x\rangle,\qquad\quad
	\|z\|^* := \sup_{\|x\|\le 1}\langle z,x\rangle
$$
for $z\in X^*$.  The key point of this section is the following duality 
result, which shows that the minimizers of the $K$-functional in the 
convex setting define a form of projection onto an explicitly defined 
scale of subsets $B_t\subseteq B$.

\begin{prop}
\label{prop:dual}
For every $t\ge 0$, there is a map $\pi_t:B\to B$ such that:
\begin{enumerate}[(i)]
\item $\pi_t(x)$ is a minimizer for Peetre's $K$-functional for every
$x\in B$:
$$
	K(t,x) := \inf_{y\in X}\{\|y\|_B+t\|x-y\|\} =
	\|\pi_t(x)\|_B + t\|x-\pi_t(x)\|.
$$
\item The set of minimizers 
$$
	B_t := \{\pi_t(x):x\in B\}
$$
can be characterized as
$$
	B_t = \{ y\in B : \exists\,z\in X^*\mbox{ such that }
	\langle z,y\rangle = \|y\|_B,~
	\|z\|_B^*\le 1,~\|z\|^*\le t\}.
$$
\item We have $\pi_t(x)=x$ for every $x\in B_t$.
\end{enumerate}
\end{prop}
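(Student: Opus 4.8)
The plan is to establish the three claims by exploiting the duality between the $K$-functional and its Legendre-type conjugate, working entirely within the convex-analytic framework. The starting point is the observation that for a fixed $x \in B$, the minimizers $y$ of $\|y\|_B + t\|x-y\|$ are characterized by the first-order optimality condition $0 \in \partial\|\cdot\|_B(y) - t\,\partial\|\cdot\|(x-y)$ (where the minus sign reflects differentiation of $\|x-y\|$ in $y$). In other words, $y$ is a minimizer if and only if there exists $z \in X^*$ with $z \in \partial\|\cdot\|_B(y)$ and $z \in t\,\partial\|\cdot\|(x-y)$. The subdifferential of a gauge at $y \neq 0$ consists of those $z$ with $\|z\|_B^* \le 1$ and $\langle z,y\rangle = \|y\|_B$; similarly $z \in t\,\partial\|\cdot\|(x-y)$ means $\|z\|^* \le t$ with $\langle z, x-y\rangle = t\|x-y\|$ (and at $y=x$ the condition degenerates to $\|z\|^*\le t$ with no norming constraint). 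So the subgradient $z$ witnessing optimality is precisely a functional satisfying all of $\langle z,y\rangle = \|y\|_B$, $\|z\|_B^* \le 1$, $\|z\|^* \le t$.

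For (i), since $B$ is compact the infimum defining $K(t,x)$ is attained (as already assumed in the general setting, and here guaranteed by compactness of $B$ and the fact that any minimizer lies in $B$ because $f(\pi_t(x)) \le K(t,x)\le \|x\|_B \le 1$); one selects $\pi_t(x)$ to be any such minimizer, with the measurable-selection or arbitrary-choice caveat already noted in the paper being irrelevant since no measurability is needed. This gives a map $\pi_t : B \to B$. For (iii): if $x \in B_t$, then by definition $x = \pi_t(x')$ for some $x' \in B$, so there exists $z$ with $\langle z,x\rangle = \|x\|_B$, $\|z\|_B^* \le 1$, $\|z\|^* \le t$; taking $y = x$ in $K(t,x)$ and using this same $z$ as the optimality witness (the second condition $\langle z,x-y\rangle = t\|x-y\|$ holds vacuously at $y=x$) shows that $y=x$ is itself a minimizer, so one may redefine $\pi_t(x) := x$; doing this consistently for all $x \in B_t$ leaves the set $B_t$ unchanged and yields (iii). (One should check the redefinition does not create circularity — since membership in $B_t$ only depends on the existence of a witnessing $z$, not on the particular selection, this is harmless.)

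The heart of the argument is (ii), the set equality $B_t = \{y \in B : \exists\, z,\ \langle z,y\rangle = \|y\|_B,\ \|z\|_B^* \le 1,\ \|z\|^* \le t\}$. The inclusion $\subseteq$ follows immediately from the optimality condition above: if $y = \pi_t(x)$ then the witnessing subgradient $z$ has exactly these three properties. For the reverse inclusion $\supseteq$, given $y \in B$ and $z$ with those three properties, one must produce some $x \in B$ with $\pi_t(x) = y$, i.e.\ such that $y$ minimizes $\|\cdot\|_B + t\|x - \cdot\|$. The natural candidate is to push $y$ outward in a direction dual to $z$: choose $w \in X$ with $\|w\| = 1$ and $\langle z,w\rangle = \|z\|^*$ (a norming vector for $z$, existing by compactness/reflexivity or Hahn–Banach in the relevant setting), and set $x = y + \lambda w$ for a suitable $\lambda > 0$. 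Then $\langle z, x-y\rangle = \lambda\|z\|^* $ and $\|x-y\| = \lambda$, so $z \in t\,\partial\|\cdot\|(x-y)$ exactly when $\|z\|^* = t$; if $\|z\|^* < t$ one instead wants $\lambda = 0$, i.e.\ $x = y$ already works and $y \in B_t$ trivially via (iii)-type reasoning. The one genuine obstacle is ensuring $x = y + \lambda w \in B$ for some $\lambda > 0$ when $\|z\|^* = t$: this requires that moving from $y$ in the direction $w$ stays inside $B$, which one arranges by noting that $\langle z, y\rangle = \|y\|_B$ together with $\|z\|_B^* \le 1$ means $z$ is a supporting functional of $B$ at the point $y/\|y\|_B$ scaled appropriately — but $y$ need not be on the boundary of $B$, and the direction $w$ that norms $z$ in $\|\cdot\|$ need not point into $B$.

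I expect the resolution to be that one does not need $x \in B$ at all for the "set of minimizers" characterization if one works with the general penalty $f = \|\cdot\|_B$ on all of $X$ and then observes $\pi_t(x) \in B$ automatically; alternatively, and more likely the intended route, one reverses the roles and argues via the conjugate/dual optimization problem directly: $K(t,x) = \sup\{\langle z,x\rangle : \|z\|_B^* \le 1,\ \|z\|^* \le t\}$ by Fenchel duality (the conjugate of $\|\cdot\|_B$ is the indicator of the unit ball of $\|\cdot\|_B^*$, the conjugate of $t\|\cdot\|$ is the indicator of the $t$-ball of $\|\cdot\|^*$, and the infimal convolution dualizes to the sum of indicators, i.e.\ the intersection), and the minimizer $y = \pi_t(x)$ together with the dual maximizer $z$ satisfies the complementary-slackness relations $\langle z, y\rangle = \|y\|_B$ and $\langle z, x-y\rangle = t\|x-y\|$. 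Running this backward: given $(y,z)$ with $\langle z,y\rangle = \|y\|_B$, $\|z\|_B^*\le 1$, $\|z\|^*\le t$, one checks that $(y,z)$ is a primal-dual optimal pair for $K(t,x)$ with $x := y$ when $\|z\|^* \le t$ suffices, recovering $y \in B_t$ via the selection in (i). Thus the cleanest path is: (1) Fenchel duality to get the dual formula for $K$; (2) extract complementary slackness to prove $\subseteq$; (3) for $\supseteq$, given a witnessing $z$, verify directly that $y$ is a minimizer of $K(t,y)$ — which is immediate since $\|y\|_B + t\cdot 0 = \|y\|_B = \langle z,y\rangle \le K(t,y)$ by the dual formula, forcing equality — hence $y = \pi_t(y) \in B_t$. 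The apparent difficulty dissolves because one takes $x = y$ itself, and the nontrivial content is entirely in the Fenchel duality computation of step (1) and the verification that $\langle z,y\rangle \le K(t,y)$ whenever $\|z\|_B^* \le 1, \|z\|^* \le t$.
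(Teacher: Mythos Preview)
Your final ``cleanest path'' is correct and coincides with the paper's proof: both derive the dual formula $K(t,x)=\sup\{\langle z,x\rangle:\|z\|_B^*\le 1,\ \|z\|^*\le t\}$ (the paper via the gauge of $\mathrm{conv}(B\cup t^{-1}B_\sim)$ and its polar $B^\circ\cap tB_\sim^\circ$, you via Fenchel conjugation of the infimal convolution), then use complementary slackness for one inclusion and the choice $x=y$ for the other. The paper sidesteps the circularity you flag by first defining $B_t':=\{y\in B:K(t,y)=\|y\|_B\}$ independently of any selection, setting $\pi_t(x)=x$ on $B_t'$ and an arbitrary minimizer elsewhere, and only then proving $B_t=B_t'$.
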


\begin{proof}
The result holds trivially for $t=0$, so we fix $t>0$ in the sequel.

\textbf{Step 1.}
Let $B_K := \mathrm{conv}(B\cup\frac{1}{t}B_\sim)$, where $B_\sim$ is the 
closed unit ball in $(X,\|\cdot\|)$.  For completeness, we recall the 
proof of the elementary fact that $K(t,x) = \|x\|_{B_K}$ for every $x\in 
X$, where $\|\cdot\|_{B_K}$ denotes the gauge of $B_K$.

Suppose first that $K(t,x)<r$, so there exists $y\in X$ with 
$\|y\|_B+t\|x-y\|<r$.  Then writing $x=\lambda x_1+ \mu x_2$ with 
$x_1=y/\|y\|_B$ and $x_2=(x-y)/t\|x-y\|$ readily implies that 
$\|x\|_{B_K}<r$.  In the converse direction, suppose that $\|x\|_{B_K}<r$, 
so that $x=\lambda x_1+\mu x_2$ for some $|\lambda|+|\mu|<r$, $x_1\in B$, 
$x_2\in \frac{1}{t}B_\sim$.  Then choosing $y=\lambda x_1$ in the 
definition of $K(t,x)$ shows that $K(t,x)<r$.

\textbf{Step 2.}
We now establish the existence of a minimizer in the definition of
$K(t,x)$ for every $x\in X$.  This is a direct consequence of the previous 
step and the compactness of $B$.  Indeed, as $B$ is compact, the set $B_K$ 
is closed.  Thus $K(t,x)=r$ implies $x\in rB_K$, so there exist 
$|\lambda|+|\mu|\le r$ and $x_1\in B$, $x_2\in \frac{1}{t}B_\sim$ such 
that $x=\lambda x_1+\mu x_2$.  It follows that $y=\lambda x_1$ is a 
minimizer for $K(t,x)$, as
$$
	K(t,x) \le
	\|\lambda x_1\|_B + t\|\mu x_2\| \le r = K(t,x).
$$

\textbf{Step 3.}
Define the set
$$
	B_t' := \{y\in B: K(t,y)=\|y\|_B\}.
$$
We can characterize this set by duality.  Indeed, note
that
$$
	K(t,y) = \sup\{\langle z,y\rangle:z\in X^*,~
			\|z\|_B^*\le 1,~\|z\|^*\le t\},
$$
where we have used the polar identity $B_K^\circ = B^\circ \cap 
tB_\sim^\circ$.  Moreover, the supremum is 
attained at some point $z\in X^*$ by the Hahn-Banach theorem.
Therefore, if $y\in B_t'$, then
there exists $z\in X^*$ such that $\langle z,y\rangle=\|y\|_B$,
$\|z\|_B^*\le 1$, and $\|z\|^*\le t$.  Conversely, if $y\in B$ is such 
that a point $z$ satisfying the latter properties exists, then
$\|y\|_B = \langle z,y\rangle\le K(t,y)\le \|y\|_B$
so that $y\in B_t'$. Thus we have
$$
	B_t' = \{ y\in B : \exists\,z\in X^*\mbox{ such that }
	\langle z,y\rangle = \|y\|_B,~
	\|z\|_B^*\le 1,~\|z\|^*\le t\}.
$$

\textbf{Step 4.}
Define the map $\pi_t:B\to B$ as follows.  For $x\in B_t'$, we set
$\pi_t(x)=x$.  For $x\not\in B_t'$, we choose $\pi_t(x)$ to be any 
minimizer in the definition of $K(t,x)$.  We are going to verify that each 
of the claims in the statement of the Proposition hold.

Let us first note that $\pi_t$ does indeed map $B$ into itself.  For $x\in 
B_t'$, this is true by construction.  For $x\not\in B_t'$, this is true 
because $\|\pi_t(x)\|_B \le K(t,x) \le \|x\|_B$.  Moreover, note that when 
$x\in B_t'$, by construction $y=x=\pi_t(x)$ is a minimizer in the 
definition of $K(t,x)$.  We have therefore established part (i).

To prove parts (ii) and (iii), it suffices to show that $B_t=B_t'$.  That
$B_t'\subseteq B_t$ is obvious from the fact that $\pi_t(x)=x$ for
$x\in B_t'\subseteq B$.  To establish the converse inclusion, we argue as 
follows.
Fix $x\in B$, and choose $z\in X^*$ such that $K(t,x)=\langle z,x\rangle$,
$\|z\|_B^*\le 1$, and $\|z\|^*\le t$.  By the bipolar theorem, we can 
write
$$
	\langle z,\pi_t(x)\rangle \le 
	\|\pi_t(x)\|_B =
	\langle z,\pi_t(x)\rangle +
	\langle z,x-\pi_t(x)\rangle - t\|x-\pi_t(x)\|  \le
	\langle z,\pi_t(x)\rangle.
$$
This implies that $\pi_t(x)\in B_t'$, and thus $B_t\subseteq B_t'$.
\end{proof}

\begin{rem}
When $B$ is a symmetric convex body in a finite-dimensional Banach space, 
the details of the proof of Proposition \ref{prop:dual} simplify 
significantly.  It is an instructive exercise to give a quick proof 
in this case using subdifferential calculus. 
\end{rem}

The proof of Theorem \ref{thm:main} in the introduction now follows 
trivially.  For future reference, we formulate the analogous result for 
$\gamma_p$-functionals.

\begin{cor}
\label{cor:main}
Let $B\subset (X,\|\cdot\|)$ be a symmetric compact convex set, and
define
$$
	B_t := \{ y\in B : \exists\,z\in X^*\mbox{ such that }
	\langle z,y\rangle = \|y\|_B,~
	\|z\|_B^*\le 1,~\|z\|^*\le t\}.
$$
Then we have for any $a>0$
$$
	\gamma_p(B) \lesssim \frac{1}{a} + 
	\sum_{n\ge 0} 2^{n/p} e_n(B_{a2^{n/2}}),
$$
where the universal constant depends on $p$ only.
\end{cor}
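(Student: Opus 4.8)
The plan is to read Corollary \ref{cor:main} directly off the two preceding propositions: Proposition \ref{prop:superdudley} supplies the abstract chaining bound, and Proposition \ref{prop:dual} identifies the implicitly defined sets of minimizers appearing there with the explicitly defined ``thin'' sets $B_t$ in the statement. Concretely, I would apply Proposition \ref{prop:superdudley} to $A:=B$, with the metric $d(x,y):=\|x-y\|$ inherited from $X$ and the penalty function $f:=\|\cdot\|_B$. The one hypothesis that needs checking is the standing assumption of Section \ref{sec:metric}, that the infimum defining $K(t,x)=\inf_{y\in X}\{\|y\|_B+t\|x-y\|\}$ is attained for every $t\ge 0$ and $x\in B$; but this is exactly what Steps 1--2 of the proof of Proposition \ref{prop:dual} establish, where the compactness of $B$ is used to show that $B_K=\mathrm{conv}(B\cup\tfrac{1}{t}B_\sim)$ is closed and hence that a minimizer exists. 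I would also record the trivial bound $\sup_{x\in B}\|x\|_B\le 1$.

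With these in hand, the sets $A_t=\{\pi_t(x):x\in A\}$ from Section \ref{sec:metric} become, for the particular choice of minimizers $\pi_t:B\to B$ furnished by Proposition \ref{prop:dual}, precisely the sets
$$
	B_t=\{y\in B:\exists\,z\in X^*\text{ such that }\langle z,y\rangle=\|y\|_B,~\|z\|_B^*\le 1,~\|z\|^*\le t\}
$$
appearing in the corollary, i.e.\ $A_t=B_t$ for every $t\ge 0$. Substituting $A=B$, $f=\|\cdot\|_B$, $\sup_{x\in B}\|x\|_B\le 1$, and $A_t=B_t$ into the conclusion of Proposition \ref{prop:superdudley} then gives, for every $a>0$,
$$
	\gamma_p(B)\lesssim\frac{1}{a}\sup_{x\in B}\|x\|_B+\sum_{n\ge 0}2^{n/p}e_n(B_{a2^{n/p}})\le\frac{1}{a}+\sum_{n\ge 0}2^{n/p}e_n(B_{a2^{n/p}}),
$$
with a universal constant depending only on $p$; the case $p=2$ recovers Theorem \ref{thm:main}.

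I do not expect any genuine obstacle: once Propositions \ref{prop:superdudley} and \ref{prop:dual} are available, the corollary is pure bookkeeping. The only points deserving a moment's attention are that the ``infimum attained'' convention used implicitly throughout Section \ref{sec:metric} really is the statement proved by the compactness argument in Proposition \ref{prop:dual}, and that recording $\sup_{x\in B}\|x\|_B\le 1$ is what lets us write the first term as $1/a$ rather than $\tfrac{1}{a}\sup_{x\in B}\|x\|_B$.
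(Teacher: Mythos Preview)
Your proposal is correct and follows exactly the route the paper takes: the proof there is the one-line statement that Corollary \ref{cor:main} is the combination of Proposition \ref{prop:superdudley} (with $A=B$, $d(x,y)=\|x-y\|$, $f=\|\cdot\|_B$) and Proposition \ref{prop:dual}, and you have simply spelled out the bookkeeping. Note that your derivation correctly yields $e_n(B_{a2^{n/p}})$ rather than the $e_n(B_{a2^{n/2}})$ printed in the corollary; the latter is a typographical slip in the statement (visible by comparing with Proposition \ref{prop:superdudley}), not an issue with your argument.
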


\begin{proof}
This is simply the combined statement of Proposition 
\ref{prop:superdudley}, where we choose the penalty $f(x)=\|x\|_B$ and
distance $d(x,y)=\|x-y\|$, and Proposition \ref{prop:dual}.
\end{proof}

We end this section by emphasizing a remark that was also made in the 
introduction.  Recall that a symmetric convex set $B\subset X$ is called 
smooth if for every $x\in X$, $x\ne 0$ there is a unique $z\in X^*$ so 
that $\langle z,x\rangle=\|x\|_B$ and $\|z\|_B^*\le 1$, cf.\ \cite{Bea82}.

\begin{cor}
\label{cor:grad}
Let $B$ be a symmetric convex body in a finite-dimensional Banach space
$(X,\|\cdot\|)$, and denote by $\partial\|y\|_B$ the subdifferential of 
$\|y\|_B$.  Then
$$
	B_t = \Big\{ y\in B : \inf_{z\in\partial\|y\|_B}\|z\|^*\le t
	\Big\}.
$$
In particular, if $B$ is smooth, then
$$
	B_t = \{ y\in B : \|\nabla\|y\|_B\|^*\le t\}.	
$$
\end{cor}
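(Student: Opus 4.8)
The plan is to derive Corollary~\ref{cor:grad} directly from the dual description of $B_t$ furnished by Proposition~\ref{prop:dual}, using the standard fact that in a finite-dimensional Banach space the subdifferential of the gauge $\|\cdot\|_B$ at a point $y\ne 0$ is precisely the set of norming functionals, namely $\partial\|y\|_B = \{z\in X^* : \langle z,y\rangle = \|y\|_B,\ \|z\|_B^*\le 1\}$. This is \cite[Chapter I]{HUL93}-type convex analysis and may be quoted; it is the computation of the support function of the sublevel set $\{x:\|x\|_B\le 1\}=B$, which gives exactly $\|\cdot\|_B^*$, so that $\partial\|y\|_B$ consists of the subgradients $z$ with $\|z\|_B^*\le 1$ achieving equality $\langle z,y\rangle=\|y\|_B$. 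Once this identification is in hand, the condition ``$\exists z\in X^*$ with $\langle z,y\rangle=\|y\|_B$, $\|z\|_B^*\le1$, $\|z\|^*\le t$'' appearing in Proposition~\ref{prop:dual} is visibly the same as ``$\exists z\in\partial\|y\|_B$ with $\|z\|^*\le t$'', i.e.\ $\inf_{z\in\partial\|y\|_B}\|z\|^*\le t$.

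First I would recall the convex-analytic lemma and, since $B$ is a body (so $0\in\mathrm{int}\,B$ and $\|\cdot\|_B$ is a genuine finite-valued norm on $X$, in particular everywhere subdifferentiable), note that $\partial\|y\|_B\ne\emptyset$ is compact, so the infimum $\inf_{z\in\partial\|y\|_B}\|z\|^*$ is attained; this lets me pass freely between ``$\le t$'' and ``$\exists z$ with $\|z\|^*\le t$''. Second, I would spell out the two inclusions: if $y\in B_t$, Proposition~\ref{prop:dual}(ii) hands me a $z$ that by the lemma lies in $\partial\|y\|_B$ and has $\|z\|^*\le t$, hence $y$ is in the right-hand set; conversely, any $z\in\partial\|y\|_B$ with $\|z\|^*\le t$ satisfies $\langle z,y\rangle=\|y\|_B$ and $\|z\|_B^*\le 1$ by the lemma, so $y\in B_t$ again by Proposition~\ref{prop:dual}(ii). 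A small point worth a sentence: for $y=0$ one has $0\in B_t$ for every $t\ge0$ (take $z=0$), and $\partial\|0\|_B=B^\circ$ contains $0$, so the formula holds there too; alternatively one observes $0\in B_0\subseteq B_t$ directly.

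For the smooth case, I would invoke the definition of smoothness recalled just before the corollary: for $y\ne 0$ there is a \emph{unique} norming functional, so $\partial\|y\|_B$ is a singleton, and a classical fact (again \cite{Bea82} or elementary subdifferential calculus) identifies this unique element with the Fr\'echet derivative $\nabla\|y\|_B$; thus $\inf_{z\in\partial\|y\|_B}\|z\|^* = \|\nabla\|y\|_B\|^*$ and the general formula collapses to $B_t = \{y\in B : \|\nabla\|y\|_B\|^*\le t\}$, with the point $y=0$ handled as before (or simply excluded as it lies in every $B_t$).

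I do not anticipate a serious obstacle here: the statement is essentially a dictionary translation of Proposition~\ref{prop:dual} into the language of subdifferentials, and the only nontrivial input is the standard identity $\partial\|y\|_B=\{z:\langle z,y\rangle=\|y\|_B,\ \|z\|_B^*\le1\}$, which the paper's ``instructive exercise'' remark already signals as routine. The mildest care needed is the bookkeeping at $y=0$ and making explicit that finite-dimensionality plus $B$ being a body guarantees nonemptiness and compactness of $\partial\|y\|_B$, so that infima are attained and the quantifier manipulations are harmless.
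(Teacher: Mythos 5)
Your proof is correct and follows the same route as the paper: identify $\partial\|y\|_B$ with the set of norming functionals $\{z:\langle z,y\rangle=\|y\|_B,\ \|z\|_B^*\le 1\}$ and read off the equivalence with the dual description of $B_t$ in Proposition~\ref{prop:dual}, the smooth case being the singleton-subdifferential specialization. The extra care you take (compactness of $\partial\|y\|_B$ so the infimum is attained, and the point $y=0$) is sound bookkeeping that the paper leaves implicit.
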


\begin{proof}
It is a classical fact that
$\partial\|y\|_B = \{z\in X^*:\langle z,y\rangle=\|y\|_B,~\|z\|_B^*\le 1\}$,
so that the result follows readily from Proposition \ref{prop:dual}; cf.\ 
\cite[Chapter VI]{HUL93}.
\end{proof}

The explicit nature of Corollary \ref{cor:grad} is particularly useful
in computations.

\section{Examples}
\label{sec:ex}

The aim of this section is to illustrate the utility of Theorem 
\ref{thm:main} in explicit computations by investigating some simple 
but conceptually interesting examples.  As our goal is to develop insight 
into the phenomenon described by Theorem \ref{thm:main}, we have avoided 
unnecessary distractions by restricting attention to situations in which 
existing entropy estimates can be used.

We write $\|x\|_r:=[\sum_i|x_i|^r]^{1/r}$, and denote by $e_1,\ldots,e_d$ 
the standard basis in $\mathbb{R}^d$. Throughout this section, we work in 
Euclidean space $(\mathbb{R}^d,\|\cdot\|)$ where $\|\cdot\|:=\|\cdot\|_2$. 
The concrete choice of the Euclidean norm is not important for our theory, 
but is made in order to enable explicit computations and is natural in the 
setting of Gaussian processes (as it corresponds to the canonical choice 
$X_t=\langle t,g\rangle$ in Theorem \ref{thm:mm}, where $g$ is a standard 
Gaussian vector in $\mathbb{R}^d$). Some of the examples developed here 
will be revisited in section \ref{sec:geom} in a much more general 
setting.

\subsection{\texorpdfstring{$\ell_q$}{lq}-Ellipsoids}
\label{sec:lqellips}

The classical example of a situation where Dudley's bound fails to be 
sharp is that of ellipsoids in Hilbert space.  In this section, we will 
investigate the following more general situation.
Given scalars $1<q<\infty$ and $b_1\ge b_2\ge\cdots\ge b_d>0$, let 
$B\subset\mathbb{R}^d$ be the $\ell_q$-ellipsoid whose gauge is given by
$$
        \|x\|_B = \Bigg[\sum_{i=1}^d 
	\bigg(\frac{|x_i|}{b_i}\bigg)^q\Bigg]^{1/q}.
$$
We will show that Theorem \ref{thm:main} yields the following optimal
bound.

\begin{prop}
\label{prop:lqellips}
In the setting of this section, we have   
$$
        \gamma_2(B) \lesssim \Bigg(\sum_{i=1}^d b_i^{q/(q-1)}\Bigg)^{(q-1)/q},
$$
where the universal constant depends on $q$ only.
\end{prop}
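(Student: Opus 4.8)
The plan is to apply Corollary~\ref{cor:main} with $p=2$, for which one must identify the thin sets $B_t$ attached to $B$ and estimate $e_n(B_t)$. Since $b_d>0$ and $1<q<\infty$, the gauge $\|\cdot\|_B$ is an $\ell_q$-norm in the rescaled coordinates $x_i/b_i$, hence Fréchet differentiable on $\mathbb{R}^d\setminus\{0\}$; as $\|\cdot\|^{*}=\|\cdot\|_2$ here, Corollary~\ref{cor:grad} gives
\[
  B_t=\Big\{y\in B:\ \big\|\nabla\|y\|_B\big\|_2\le t\Big\},
  \qquad
  \nabla\|y\|_B=\|y\|_B^{\,1-q}\big(b_i^{-q}|y_i|^{q-1}\operatorname{sgn}(y_i)\big)_{i=1}^d .
\]
Writing $p':=q/(q-1)$, the constraint $\|\nabla\|y\|_B\|_2\le t$ together with $\|y\|_B\le1$ yields $\sum_{i}\big(|y_i|\,b_i^{-p'}\big)^{2(q-1)}\le t^{2}$, and in particular $|y_i|\le t^{1/(q-1)}b_i^{p'}$ for every $i$.

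Next I would pin down the shape of $B_t$. Using also $B_t\subseteq B$ and separating the coordinates according to whether $tb_i\ge1$ or $tb_i<1$, one sees that $B_t$ is contained, up to a factor depending only on $q$, in a set of ellipsoidal type with semi-axes $\beta_i(t):=b_i\min(1,tb_i)^{1/(q-1)}$: when $q\le2$ simply the Euclidean ellipsoid $\{x:\sum_i(x_i/\beta_i(t))^2\le1\}$ (one checks $\sum_i(x_i/\beta_i(t))^2\le2$ by comparing powers on the two coordinate ranges), and for $q>2$ the direct sum of an $\ell_q$-ellipsoid on the coordinates with $tb_i\ge1$ and an $\ell_{2(q-1)}$-ellipsoid on the rest, both with semi-axes $\beta_i(t)$. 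The point is that $\beta_i(t)$ equals $b_i$ only where $tb_i\ge1$ and decays like $b_i(tb_i)^{1/(q-1)}$ elsewhere, so $B_t$ is genuinely much thinner than $B$. Consequently $e_n(B_t)$ is controlled by the classical sharp estimates for entropy numbers of diagonal operators (equivalently, of $\ell_r$-ellipsoids) into $\ell_2^d$, for which the governing quantities are the partial geometric means $\big(\prod_{i\le k}\beta_i(t)\big)^{1/k}$.

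Then I would feed this into Corollary~\ref{cor:main}. By homogeneity one may normalize $\sum_i b_i^{p'}=1$ and take $a=1$, so that the first term is $1/a\asymp(\sum_i b_i^{p'})^{(q-1)/q}$ and it remains to prove $\sum_{n\ge0}2^{n/2}e_n(B_{2^{n/2}})\lesssim 1$. Evaluating the entropy estimate for $\beta=\beta(2^{n/2})$, the relevant range in the supremum over $k$ is $k\asymp2^n$, and the normalization bound $b_i\le i^{-1/p'}$ lets one control the partial geometric means after splitting $\{1,\dots,2^n\}$ into the block $\{i:2^{n/2}b_i\ge1\}$ and its complement. This produces a bound on $2^{n/2}e_n(B_{2^{n/2}})$ that is summable in $n$ to an absolute constant, which gives the proposition.

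The main obstacle is precisely this last summation. The estimate must be scale-matched: it is essential that $e_n$ is applied to the specific set $B_{a2^{n/2}}$, whose semi-axes shrink at the rate $b_i\min(1,a2^{n/2}b_i)^{1/(q-1)}$, for only then does the series collapse to exactly $(\sum_i b_i^{p'})^{1/p'}$ with no logarithmic loss. Coarser inputs do not suffice: although $B_t$ is thinner than $B$ in many coordinates, replacing $B_t$ by its circumscribed box destroys the gain (the box is fat in the ellipsoidal directions, and the extra factor $\sqrt{k}\asymp 2^{n/2}$ in the box entropy already makes the series diverge even when $q=2$), so it is crucial to retain the ellipsoidal shape. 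Tracking the contributions of the two coordinate ranges across all dyadic scales is elementary but must be done with care.
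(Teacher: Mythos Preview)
Your approach is sound, but you are working much harder than necessary; the paper's argument is essentially a three-line specialization of what you have already written down. The inclusion you derive,
\[
  \sum_i\big(|y_i|\,b_i^{-p'}\big)^{2(q-1)}\le t^{2}\qquad(p'=q/(q-1)),
\]
is precisely the statement that $B_t\subseteq t^{1/(q-1)}C$, where $C$ is the $\ell_{2q-2}$-ellipsoid with semiaxes $c_i=b_i^{q/(q-1)}$. The key point you overlook is that $C$ is a \emph{fixed} set, independent of $t$: the entire $t$-dependence of the container is a scalar dilation. Hence $e_n(B_{a2^{n/2}})\le (a2^{n/2})^{1/(q-1)}e_n(C)$, and Theorem~\ref{thm:main} gives directly
\[
  \gamma_2(B)\lesssim \frac{1}{a}+a^{1/(q-1)}\sum_{n\ge0}2^{nq/(2q-2)}e_n(C).
\]
Optimizing over $a>0$ yields $\gamma_2(B)\lesssim\big(\sum_{n\ge0}2^{nq/(2q-2)}e_n(C)\big)^{(q-1)/q}$, and a single application of Lemma~\ref{lem:carl} (with $r=2q-2$, $s=u=1$) evaluates the sum as $\sum_i b_i^{q/(q-1)}$. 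That is the entire proof in the paper.

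Your detour through the $t$-dependent semiaxes $\beta_i(t)=\min\big(b_i,\,t^{1/(q-1)}b_i^{p'}\big)$ comes from further intersecting $t^{1/(q-1)}C$ with $B$ before taking entropy numbers. This does produce a strictly smaller container, but at the cost of the case split on $q$, the mixed $\ell_q\oplus\ell_{2(q-1)}$ structure for $q>2$ (for which no single off-the-shelf entropy estimate like Lemma~\ref{lem:carl} applies), and the scale-matched summation you correctly flag as the main obstacle. None of this buys anything: discarding the intersection with $B$ and keeping only the dilate $t^{1/(q-1)}C$ loses nothing once $a$ is optimized, and indeed even under your normalization with $a=1$ the cruder bound already gives $\sum_n 2^{n/2}e_n(B_{2^{n/2}})\le\sum_n 2^{nq/(2q-2)}e_n(C)\asymp\sum_i b_i^{p'}=1$ directly.
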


Of course, this result can easily be obtained from Theorem \ref{thm:mm}, 
but our aim is to provide a geometric proof that explains why the result 
is true.

In order to apply either Dudley's bound or Theorem \ref{thm:main}, we 
will require suitable estimates on the entropy numbers of $\ell_q$-ellipsoids.  
The behavior of these entropy numbers is investigated in detail in a 
classic paper by Carl \cite{Car81} (in the special case of 
$\ell_2$-ellipsoids a much more elementary approach can be found in 
\cite[\S 2.5]{Tal14}).  For future reference, we record a more general 
form of the main result of Carl than is presently needed.  The following 
can be read off from the proof of \cite[Theorem 2]{Car81}. (While the 
result of Carl is formulated only for $r\ge 1$, the proof extends directly 
to the case $0<r<1$ if we replace \cite[Theorem 1]{Car81} by 
\cite[Proposition 3.2.2]{ET96}.)

\begin{lem}[\cite{Car81}]
\label{lem:carl}
Given $0<r<\infty$, $1/s > (1/2-1/r)_+$, $0<u<\infty$, and 
scalars $c_1\ge c_2\ge \cdots\ge c_d>0$, the $\ell_r$-ellipsoid
$C=\{x\in\mathbb{R}^d:\|(x_i/c_i)\|_r\le 1\}$ satisfies
$$
	\sum_{n\ge 0} \big(2^{n(1/s+1/r-1/2)}e_n(C)\big)^u
	\asymp
	\sum_{k=1}^d (k^{1/s-1/u}c_k)^u
$$
where the universal constant depends on $r,s,u$ only.
\end{lem}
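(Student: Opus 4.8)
The plan is to recognize the asserted two-sided bound as essentially the main theorem of Carl \cite{Car81} and to indicate how it is extracted from his proof. First I would note that $C$ is exactly the image of the unit ball of $\ell_r^d$ under the diagonal operator $D_c:=\mathrm{diag}(c_1,\dots,c_d)$, so that $e_n(C)$ is the (dyadic) entropy number of $D_c\colon\ell_r^d\to\ell_2^d$. Writing $\|(a_k)\|_{s,u}^u:=\sum_{k\ge1}(k^{1/s}a_k^*)^u/k$ for the discrete Lorentz quasi-norm (with $(a_k^*)$ the decreasing rearrangement), the right-hand side of the claim is exactly $\|(c_k)\|_{s,u}^u$; and after translating between the indexing used here --- nets of cardinality $<2^{2^n}$ --- and the classical convention $e_k$ --- nets of cardinality $\le2^{k-1}$ --- via $k\asymp2^n$, and using that the exponent $\theta:=1/s+1/r-1/2$ is positive (which is precisely the hypothesis $1/s>(1/2-1/r)_+$), the left-hand side is comparable to $\|(k^{1/r-1/2}e_k(D_c))\|_{s,u}^u$. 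So the lemma asserts that the sequences $(c_k)$ and $(k^{1/r-1/2}e_k(D_c))$ carry the same $\ell_{s,u}$ quasi-norm for every admissible pair, which is the content of \cite[Theorem 2]{Car81}; I would reproduce that argument, the only point requiring attention being the bookkeeping needed to state it for a general index pair and for $0<r<1$.

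For the upper bound I would split the coordinate axis into dyadic blocks $J_\ell:=\{k:2^{-\ell-1}<c_k/c_1\le2^{-\ell}\}$, on each of which the weights are all comparable to $2^{-\ell}c_1$; with $m_\ell:=|J_\ell|$ one has $C\subseteq\sum_{\ell\ge0}C_{J_\ell}$, where $C_{J_\ell}$ is, up to a universal constant, $2^{-\ell}c_1$ times the unit ball of $\ell_r^{m_\ell}$ supported on $J_\ell$. Distributing the entropy budget over the blocks and invoking the sub-additivity $e_{\sum_\ell n_\ell}(\sum_\ell C_{J_\ell})\lesssim\sum_\ell e_{n_\ell}(C_{J_\ell})$ (valid up to the number of blocks), together with the finite-dimensional estimates for $e_k(\mathrm{id}\colon\ell_r^m\to\ell_2^m)$ --- which are \cite[Theorem 1]{Car81} when $r\ge1$ and \cite[Proposition 3.2.2]{ET96} when $0<r<1$ --- produces a bound on $e_n(C)$ that is a supremum of block contributions. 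The matching lower bound comes from restricting $C$ to a single, optimally chosen block $J_\ell$ and invoking the volumetric lower bound for $e_k(\mathrm{id}\colon\ell_r^{m_\ell}\to\ell_2^{m_\ell})$ from the same references. Finally I would sum over $n$ (equivalently, over dyadic $k$) and rearrange; the hypothesis $1/s>(1/2-1/r)_+$ is precisely what makes the geometric series coming from the ``tail'' blocks --- whose $\ell_2$-diameter is of order $m_\ell^{(1/2-1/r)_+}$ when they receive no entropy budget --- summable, so that the supremum-over-blocks estimate telescopes into the clean expression $\sum_{k=1}^d(k^{1/s-1/u}c_k)^u$.

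The step I expect to be the main obstacle is this final summation and rearrangement. The block that dominates $e_n(C)$ shifts as $n$ grows, and the finite-dimensional entropy numbers $e_k(\mathrm{id}\colon\ell_r^m\to\ell_2^m)$ pass through three distinct regimes --- an initial constant regime for $k\lesssim\log m$, a ``logarithmic'' regime for $\log m\lesssim k\lesssim m$, and a volumetric regime $e_k\asymp2^{-k/m}m^{1/2-1/r}$ for $k\gtrsim m$ --- as the budget grows relative to the block dimension, so keeping the upper and lower estimates aligned through these discrete Hardy-type manipulations takes some care. By contrast, the passage to $0<r<1$ is essentially free: the triangle inequality of $\|\cdot\|_r$ is used nowhere essential --- only quasi-triangle inequalities and volume ratios enter --- and \cite[Proposition 3.2.2]{ET96} supplies exactly the finite-dimensional input that \cite[Theorem 1]{Car81} provides in the Banach range, so the rest of the argument goes through verbatim.
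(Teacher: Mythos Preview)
Your proposal is correct and aligns with the paper's treatment: the paper does not actually prove this lemma but simply cites it as \cite[Theorem~2]{Car81}, noting only that for $0<r<1$ one substitutes \cite[Proposition~3.2.2]{ET96} for \cite[Theorem~1]{Car81}. Your sketch reproduces exactly the structure of Carl's argument (diagonal operator interpretation, dyadic block decomposition, finite-dimensional input for $e_k(\mathrm{id}\colon\ell_r^m\to\ell_2^m)$, and the Lorentz-norm bookkeeping), together with the same observation about the extension to the quasi-Banach range, so there is nothing to add.
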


Applying this result with $r=q$, $1/s = 1-1/q$, and $u=1$ yields
$$
	\sum_{n\ge 0} 2^{n/2} e_n(B) \asymp
	\sum_{k=1}^d k^{-1/q}b_k.
$$
We therefore see immediately that Dudley's bound is suboptimal for 
$\ell_q$-ellipsoids: Dudley's bound is much larger than $\gamma_2(B)$, 
say, when $b_k=k^{-(q-1)/q}(\log k)^{-1}$.

To obtain a sharp bound, we will apply Theorem \ref{thm:main}.
The crux of the matter is to control the sets $B_t$.  In the present 
setting, this is exceedingly simple and gives a vivid illustration of 
where the improvement over Dudley's bound comes from.

\begin{proof}[Proof of Proposition \ref{prop:lqellips}]
Note that $B$ is a smooth convex body with
$$
	\frac{\partial\|y\|_B}{\partial y_k} = 
	\frac{1}{b_k^q}
	\frac{|y_k|^{q-1}}{\|y\|_B^{q-1}}\mathop{\mathrm{sign}}(y_k).
$$
Thus Corollary \ref{cor:grad} gives
$$
	B_t = \{y\in B:\|y\|_{C}\le t^{1/(q-1)}\|y\|_B\} \subseteq
	t^{1/(q-1)}C,
$$
where
$$
        \|y\|_C = \Bigg[\sum_{i=1}^d 
	\bigg(\frac{|y_i|}{b_i^{q/(q-1)}}\bigg)^{2q-2}\Bigg]^{1/(2q-2)}.
$$
Substituting $B_t\subseteq t^{1/(q-1)}C$ into Theorem \ref{thm:main} and
optimizing over $a>0$ yields
$$
	\gamma_2(B) \lesssim
	\Bigg(
	\sum_{n\ge 0}2^{nq/(2q-2)} e_n(C)
	\Bigg)^{(q-1)/q}.
$$
The conclusion follows by applying Lemma \ref{lem:carl} with
$r=2q-2$ and $s=u=1$.
\end{proof}

The key point of the proof of Proposition \ref{prop:lqellips} is that each 
subset $B_t$ of the $\ell_q$-ellipsoid $B$ is contained in a dilation of 
the much ``thinner'' $\ell_{2q-2}$-ellipsoid $C$: the lengths of the 
semiaxes of $C$ have been raised to the power $q/(q-1)$ as compared to 
those of $B$.  This is precisely why we obtain the correct powers of $b_i$ 
inside the sum in Proposition \ref{prop:lqellips}.  The author sees no 
obvious way to explain this miracle other than that it drops out of the 
trivial explicit computation performed above.  However, a deeper 
understanding of the geometry of the sets $B_t$ for $\ell_q$-ellipsoids 
will be obtained in a much more general setting in section \ref{sec:geom}.

\begin{rem}
There exist two previous geometric proofs of Proposition 
\ref{prop:lqellips} for special values of $q$.  The first, in \cite[\S 
15.6]{LT91}, gives a delicate manual construction of an equivalent 
formulation of $\gamma_2(B)$ for $q=2$.  The second, in \cite[\S 
4.1]{Tal14}, deduces the result for $2\le q<\infty$ from a more general 
bound for uniformly convex bodies that is proved using the growth 
functional machinery. We will revisit the latter idea in section 
\ref{sec:geom}, where we will also see that uniform convexity fails to 
explain the behavior of $\ell_q$-ellipsoids for $1<q<2$.  That we have 
obtained a sharp bound for every value of $q$ with the same proof 
therefore hides the fact that $\ell_q$-ellipsoids can have a very 
different geometry for different values of $q$.  
\end{rem}

\begin{rem}
The universal constant in Proposition \ref{prop:lqellips} must necessarily 
depend on $q$: if this were not the case, then we would obtain 
$\gamma_2(B)\lesssim b_1$ in the limit $q\downarrow 1$ which is easily 
seen to be false by Theorem \ref{thm:mm}.  Unfortunately, the entropy 
estimates provided by Lemma \ref{lem:carl} are not sufficiently accurate 
to recover the correct behavior as $q\downarrow 1$.  This is not a 
deficiency of Theorem \ref{thm:main}, however: the case $q=1$ is of 
particular interest in its own right and will be investigated in 
the next section. \end{rem}

\subsection{Octahedra}
\label{sec:oct}

In this section, we investigate the limiting case $q=1$ of the example 
developed in the previous section.  That is, given scalars $b_1\ge b_2\ge 
\cdots \ge b_d>0$, we investigate the octahedron $B\subset\mathbb{R}^d$
defined by
$$
	B = \absconv\{b_ie_i:i=1,\ldots,d\}.
$$
It is not difficult to show that Dudley's bound is suboptimal in this 
setting \cite[Exercise 2.2.15]{Tal14}.
We will show that Theorem \ref{thm:main} yields the following optimal 
bound.

\begin{prop}
\label{prop:oct}
In the setting of this section, we have
$$
	\gamma_2(B) \lesssim 
	\Sigma := \max_{i\le d} b_i\sqrt{\log(i+1)}.
$$
\end{prop}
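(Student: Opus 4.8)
The plan is to apply Theorem~\ref{thm:main}, so the main task is to describe the sets $B_t$. The gauge of the octahedron is the weighted $\ell_1$-norm $\|x\|_B=\sum_i|x_i|/b_i$, whose subdifferential at a point $y$ consists of the $z\in\mathbb{R}^d$ with $z_i=\mathrm{sign}(y_i)/b_i$ for $i\in\supp(y)$ and $|z_i|\le 1/b_i$ otherwise; minimizing the Euclidean norm (which is the dual norm $\|\cdot\|^*$ in this setting) over this set amounts to taking $z_i=0$ off the support of $y$. Corollary~\ref{cor:grad} therefore gives
$$
	B_t=\Big\{y\in B:\sum_{i\in\supp(y)}b_i^{-2}\le t^2\Big\},
$$
so that $B_t$ consists precisely of the vectors in $B$ whose support is ``light.''

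From this I would extract the two structural features responsible for the smallness of the entropy numbers of $B_t$. First, since $b_i\sqrt{\log(i+1)}\le\Sigma$ by definition of $\Sigma$, every $y\in B_t$ satisfies $\sum_{i\in\supp(y)}\log(i+1)\le t^2\Sigma^2$, and therefore $B_t\subseteq\bigcup_{S\in\mathcal{S}(t)}\absconv\{b_ie_i:i\in S\}$, where $\mathcal{S}(t):=\{S\subseteq\{1,\dots,d\}:\sum_{i\in S}\log(i+1)\le t^2\Sigma^2\}$; moreover each face $\absconv\{b_ie_i:i\in S\}$ is contained in the Euclidean ball of radius $b_1\le\Sigma/\sqrt{\log 2}$, and every $S\in\mathcal{S}(t)$ has $|S|\log|S|\lesssim t^2\Sigma^2$ (a light set of maximal cardinality occupies the lowest indices, and $\sum_{i=1}^{s}\log(i+1)\gtrsim s\log s$). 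Second, a Chernoff-type bound controls the number of light supports: for any $\theta>0$,
$$
	|\mathcal{S}(t)|\ \le\ e^{\theta t^2\Sigma^2}\prod_{i\ge 1}\bigl(1+(i+1)^{-\theta}\bigr),
$$
and the product converges for every $\theta>1$. This is exactly where the hypothesis $b_i\lesssim\Sigma/\sqrt{\log(i+1)}$ is used: it forces the weights $\log(i+1)$ to grow fast enough for the product to be finite, so that the admissible supports number only $e^{O(t^2\Sigma^2)}$ rather than $2^d$.

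The entropy estimate now follows. Fix $\theta=2$ and put $a:=\varepsilon_0/\Sigma$ for a sufficiently small absolute constant $\varepsilon_0$ with $2\varepsilon_0^2<\log 2$, and write $t_n:=a2^{n/2}$, so that $t_n^2\Sigma^2=\varepsilon_0^2 2^n$. Covering each face $\absconv\{b_ie_i:i\in S\}$ by a $\delta$-net of at most $(3b_1/\delta)^{|S|}$ points and taking the union over $S\in\mathcal{S}(t_n)$ produces a $\delta$-net of $B_{t_n}$ whose cardinality $N$ satisfies (using $|S|\lesssim\varepsilon_0^2 2^n/n$ for $n$ large)
$$
	\log N\ \lesssim\ \varepsilon_0^2 2^n+\frac{\varepsilon_0^2 2^n}{n}\,\log\frac{3b_1}{\delta}.
$$
Since $2\varepsilon_0^2<\log 2$, this is below $2^n\log 2$ — i.e.\ $N<2^{2^n}$ — as soon as $\delta\gtrsim\Sigma e^{-cn}$ with an absolute constant $c>0$ that, for $\varepsilon_0$ chosen small enough, exceeds $\tfrac12\log 2$. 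Hence $e_n(B_{t_n})\lesssim\Sigma e^{-cn}$ for all $n$ past an absolute constant, while trivially $e_n(B_{t_n})\le b_1\lesssim\Sigma$ for every $n$, so $\sum_{n\ge 0}2^{n/2}e_n(B_{t_n})\lesssim\Sigma+\Sigma\sum_{n\ge0}2^{n/2}e^{-cn}\lesssim\Sigma$. Theorem~\ref{thm:main} then gives
$$
	\gamma_2(B)\ \lesssim\ \frac1a+\sum_{n\ge 0}2^{n/2}e_n(B_{t_n})\ \lesssim\ \Sigma.
$$

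I expect the crux to be the entropy estimate — specifically, the recognition that $e_n(B_{t_n})$ is governed not by the ambient dimension but by the two features isolated above: that there are only $e^{O(t^2\Sigma^2)}$ admissible supports, and that the associated faces of $B$ are low-dimensional with Euclidean radius $O(\Sigma)$. The quantitative point that makes everything fit is choosing $a$ to be a small enough multiple of $1/\Sigma$, which simultaneously keeps $|\mathcal{S}(a2^{n/2})|$ well below $2^{2^n}$ (leaving room to net the faces) and keeps $1/a\lesssim\Sigma$. One should also note that the crude bound $|S|\le t^2\Sigma^2/\log 2$ would only yield $e_n(B_{t_n})\lesssim\Sigma$ uniformly in $n$, which is not summable against $2^{n/2}$; it is the sharper $|S|\lesssim t^2\Sigma^2/\log(t^2\Sigma^2)$ that makes $e_n(B_{t_n})$ genuinely decay.
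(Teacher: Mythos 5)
Your proposal is correct and takes essentially the same route as the paper: you identify $B_t$ as the sparse vectors with light support via Corollary \ref{cor:grad}, pass to the log-weighted support constraint using $1/b_i^2\ge \log(i+1)/\Sigma^2$, count admissible supports and net each low-dimensional face volumetrically, and apply Theorem \ref{thm:main} with $a\asymp 1/\Sigma$. The only divergence is bookkeeping in the entropy estimate: the paper stratifies supports by cardinality $k$ and exploits the factor $1/k!$ to obtain $e_n(B_{a2^{n/2}})\lesssim 2^{-n}b_1$, whereas your Chernoff count combined with the maximal-support-size bound $|S|\lesssim t^2\Sigma^2/\log(t^2\Sigma^2)$ gives the slightly weaker but still summable decay $e_n\lesssim \Sigma e^{-cn}$ with $c>\tfrac{1}{2}\log 2$.
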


Of course, this result could easily be obtained from Theorem \ref{thm:mm}, 
and a rather difficult geometric proof using growth functionals can be 
found in \cite[\S 8]{Tal96}.  However, the point for our purposes is that 
this result follows in a completely elementary fashion from Theorem 
\ref{thm:main}.  To apply the latter, let us first identify the sets 
$B_t$.

\begin{lem}
\label{lem:octspars}
For any $t\ge 0$, we have
$$
	B_t =
	\Bigg\{
	y\in B : \sum_{i=1}^d \frac{\mathbf{1}_{y_i\ne 0}}{b_i^2} \le 
	t^2
	\Bigg\}.
$$
\end{lem}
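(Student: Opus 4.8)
The plan is to apply the duality characterization of $B_t$ from Proposition~\ref{prop:dual} directly to the octahedron $B = \absconv\{b_ie_i\}$, for which everything can be computed by hand. First I would record the relevant norms: the gauge of $B$ is $\|y\|_B = \sum_{i=1}^d |y_i|/b_i$ (the $\ell_1$-type norm adapted to the semiaxes), so that the dual gauge is $\|z\|_B^* = \max_i b_i|z_i|$, while the Euclidean norm $\|\cdot\| = \|\cdot\|_2$ is its own dual. According to Proposition~\ref{prop:dual}(ii), $y\in B_t$ iff $y\in B$ and there exists $z\in\mathbb{R}^d$ with $\langle z,y\rangle = \|y\|_B$, $\max_i b_i|z_i|\le 1$, and $\|z\|_2\le t$.

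Next I would analyze the norming condition $\langle z,y\rangle = \|y\|_B = \sum_i |y_i|/b_i$ under the constraint $|z_i| \le 1/b_i$ for all $i$. Since $\langle z,y\rangle = \sum_i z_iy_i \le \sum_i |z_i||y_i| \le \sum_i |y_i|/b_i$, equality forces $z_i = \mathop{\mathrm{sign}}(y_i)/b_i$ on every coordinate $i$ with $y_i\ne 0$, while the coordinates with $y_i = 0$ are free subject only to $|z_i|\le 1/b_i$. Therefore a norming functional $z$ with $\|z\|_2\le t$ exists precisely when the ``forced'' part has small enough Euclidean norm, i.e. when
$$
\inf\Big\{\|z\|_2^2 : z_i = \tfrac{\mathop{\mathrm{sign}}(y_i)}{b_i}\text{ for }y_i\ne 0,\ |z_i|\le\tfrac1{b_i}\text{ else}\Big\} = \sum_{i:\,y_i\ne 0}\frac{1}{b_i^2} \le t^2,
$$
where the infimum is plainly attained by setting $z_i = 0$ wherever $y_i = 0$. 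This is exactly the claimed sparsity condition $\sum_i \mathbf{1}_{y_i\ne 0}/b_i^2 \le t^2$, so combining with $y\in B$ gives the stated formula for $B_t$.

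I expect no serious obstacle here; the only points requiring a word of care are (a) verifying that the dual gauge of the octahedron is indeed $\|z\|_B^* = \max_i b_i|z_i|$ (immediate from $\|z\|_B^* = \sup_{x\in B}\langle z,x\rangle$ and the fact that $B$ is the absolute convex hull of the points $b_ie_i$, so the supremum is attained at one of these vertices), and (b) noting that the coordinates $i$ with $y_i=0$ genuinely impose no constraint beyond $|z_i|\le 1/b_i$, which is compatible with $z_i=0$ and hence contributes nothing to the minimal Euclidean norm of an admissible $z$. Once these are in place the equivalence of the two descriptions of $B_t$ is an elementary coordinatewise computation.
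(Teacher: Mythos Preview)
Your proposal is correct and follows essentially the same approach as the paper: the paper invokes Corollary~\ref{cor:grad} (the subdifferential reformulation of Proposition~\ref{prop:dual}), writes down the subdifferential $\partial\|y\|_B$ in exactly the coordinatewise form you derive, and then minimizes $\|z\|_2^2$ over it by setting the free coordinates to zero. Your direct appeal to Proposition~\ref{prop:dual}(ii) in place of Corollary~\ref{cor:grad} is a cosmetic difference only, since the two statements coincide once one recalls that $\partial\|y\|_B = \{z:\langle z,y\rangle = \|y\|_B,\ \|z\|_B^*\le 1\}$.
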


\begin{proof}
While $\|\cdot\|_B$ is not smooth, we can easily compute its 
subdifferential:
$$
	\partial\|y\|_B =
	\{z\in\mathbb{R}^d:z_i=\mathop{\mathrm{sign}}(y_i)/b_i
	\mbox{ if }y_i\ne 0,~|z_i|\le 1/b_i\mbox{ if }y_i=0\}.
$$
We therefore obtain
$$
	\inf_{z\in\partial\|y\|_B}\|z\|^2 =
	\sum_{i=1}^d \frac{\mathbf{1}_{y_i\ne 0}}{b_i^2},
$$
and the result follows from Corollary \ref{cor:grad}.
\end{proof}

Lemma \ref{lem:octspars} shows that the sets $B_t$ are very thin indeed: 
they consist of sparse vectors.  Controlling the entropy numbers of 
such sets is an easy exercise; for each fixed sparsity pattern we can 
discretize using a standard volumetric argument, while counting the number 
of sparsity patterns is a matter of simple combinatorics.

\begin{lem}
\label{lem:octent}
There is a universal constant $c>0$ such that for all $n\ge 0$
$$
	e_n(B_{c2^{n/2}/\Sigma}) \lesssim 2^{-n}b_1.
$$
\end{lem}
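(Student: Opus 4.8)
The plan is to exhibit, for $t:=c2^{n/2}/\Sigma$, an explicit $2^{-n}b_1$-net of $B_t$ of cardinality strictly less than $2^{2^n}$; by Definition \ref{defn:talg} this yields $e_n(B_t)\le 2^{-n}b_1$, and in fact shows the constant in the statement may be taken to be $1$. By Lemma \ref{lem:octspars}, $B_t$ is the union, over all supports $S\subseteq\{1,\dots,d\}$ with $\sum_{i\in S}b_i^{-2}\le t^2$ (call these \emph{admissible}), of the faces
$$
	B\cap\mathrm{span}\{e_i:i\in S\}=\absconv\{b_ie_i:i\in S\}.
$$
Each such face is an $|S|$-dimensional octahedron inscribed in the Euclidean ball of radius $\max_{i\in S}b_i\le b_1$, so the elementary volumetric bound supplies a $2^{-n}b_1$-net of it of cardinality at most $(1+2^{n+1})^{|S|}\le(3\cdot 2^n)^{|S|}$. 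Taking the union of these nets over all admissible $S$, it suffices to verify that
$$
	T:=\sum_{S\ \mathrm{admissible}}(3\cdot 2^n)^{|S|}<2^{2^n}
$$
for $c$ a sufficiently small absolute constant.

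To estimate $T$ I would invoke the definition of $\Sigma$: since $b_i\sqrt{\log(i+1)}\le\Sigma$ for all $i$, admissibility of $S$ forces $\sum_{i\in S}\log(i+1)\le\Sigma^2\sum_{i\in S}b_i^{-2}\le\Sigma^2t^2=c^2 2^n$. The step I expect to be the main obstacle is the ensuing combinatorial count. One must resist the temptation to bound the number of admissible $S$ by first choosing a sparsity pattern inside $\{1,\dots,N\}$, where $N$ is the largest index with $b_N\ge 1/t$: here $N$ can be as large as $e^{c^2 2^n}$, and this crude bound overcounts by a factor that is again doubly exponential in $n$. Instead one should exploit the multiplicative constraint $\prod_{i\in S}(i+1)\le e^{c^2 2^n}$ at full strength, for example via the Chernoff-type inequality obtained by inserting $1\le e^{2(c^2 2^n-\sum_{i\in S}\log(i+1))}$ and expanding the resulting product over coordinates:
$$
	T\le\sum_{S:\ \sum_{i\in S}\log(i+1)\le c^2 2^n}(3\cdot 2^n)^{|S|}
	\le e^{2c^2 2^n}\prod_{i\ge 1}\Big(1+\frac{3\cdot 2^n}{(i+1)^2}\Big).
$$

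It then remains to control the infinite product. Writing $M:=3\cdot 2^n$ and $R:=\sqrt M\asymp 2^{n/2}$, I would split $\log\prod_{i\ge1}(1+M/(i+1)^2)$ at $i+1\approx R$, where the two regimes $M/(i+1)^2\gtrless 1$ meet. For $i+1\le R$ one bounds $\log(1+M/(i+1)^2)\le\log(2M)-2\log(i+1)$ and uses $\sum_{2\le j\le R}\log j\ge R\log R-R$; since $2\log R=\log M$, the leading $R\log M$ contributions cancel and only $O(R)$ survives. For $i+1>R$ one bounds $\log(1+M/(i+1)^2)\le M/(i+1)^2$ and uses $\sum_{j>R}j^{-2}=O(1/R)$, which again leaves $O(M/R)=O(R)$. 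Hence $\prod_{i\ge1}(1+3\cdot 2^n/(i+1)^2)\le e^{O(2^{n/2})}$, so $\log_2 T\le(3c^2+o(1))\,2^n$, which is $<2^n$ once $c$ is a small enough absolute constant. For the finitely many small values of $n$ for which the $o(1)$ term above is not yet negligible the claim is trivial: shrinking $c$ further so that $c^2 2^n<\log 2$ for all such $n$ forces every admissible $S$ to be empty, so $B_t=\{0\}$ by Lemma \ref{lem:octspars} and $e_n(B_t)=0$.
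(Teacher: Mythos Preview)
Your proof is correct and follows essentially the same approach as the paper: identify $B_t$ as a union of sparse faces via Lemma~\ref{lem:octspars}, cover each face by a volumetric net, and control the total cardinality by the Chernoff-type insertion $1\le\prod_{i\in S}(i+1)^{-2}\cdot e^{2c^22^n}$. The only difference is bookkeeping in the final count: the paper stratifies by $|S|=k$ and uses the neat inequality $\sum_{|I|=k}\prod_{i\in I}(i+1)^{-2}<1/k!$ to sum the exponential series, whereas you bound the product $\prod_{i\ge1}(1+M/(i+1)^2)=e^{O(\sqrt M)}$ directly by splitting at $i+1\approx\sqrt M$; both routes are standard and yield the same conclusion.
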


\begin{proof}
Fix $n\ge 0$.
As $1/b_i^2 \ge \log(i+1)/\Sigma^2$ by definition, we have
$$
	B_t \subseteq C_t :=
	\Bigg\{
	y\in B : \sum_{i=1}^d \log(i+1)\,\mathbf{1}_{y_i\ne 0} \le 
	\Sigma^2t^2
	\Bigg\}.
$$
It suffices to control the entropy numbers of the larger set 
$C_{c2^{n/2}/\Sigma}$.

Let us begin with some counting.  Denote by $\mathcal{I}$ the family
of all admissible sparsity patterns of $y\in C_{c2^{n/2}/\Sigma}$, that 
is, $\mathcal{I}$ is the family of all $I\subseteq[d]$ such that
$$
	\sum_{i\in I}\log(i+1) \le c^22^{n}.
$$
Denote by $\mathcal{I}_k\subseteq\mathcal{I}$ the family of all 
$I\in\mathcal{I}$ with cardinality $|I|=k$.  Let us bound the number 
of such sets.  Setting $c  := \sqrt{\log 2}/2$, we can estimate
$$
	|\mathcal{I}_k| 
	=
	\sum_{|I|=k}
	\mathbf{1}_{I\in\mathcal{I}}
	=
	\sum_{|I|=k}
	\mathbf{1}_{
	\prod_{i\in I}(i+1)^2\le 2^{2^{n-1}}}
	\le
	2^{2^{n-1}} 
	\sum_{|I|=k}
	\prod_{i\in I}\frac{1}{(i+1)^2}.
$$
The right-hand side can be bounded as follows:
$$
        \sum_{|I|=k}
        \prod_{i\in I}\frac{1}{(i+1)^2} =
	\sum_{1\le\ell_1<\ell_2<\cdots<\ell_k\le d}\,
	\prod_{i=1}^k\frac{1}{(\ell_i+1)^2}
	\le
	\prod_{i=1}^k\sum_{\ell\ge i}\frac{1}{(\ell+1)^2}
	<\frac{1}{k!},
$$
where we have used that
$$
	\sum_{\ell\ge i}\frac{1}{(\ell+1)^2} <
	\sum_{\ell\ge i}\int_\ell^{\ell+1}\frac{1}{x^2}\,dx
	= \int_i^\infty \frac{1}{x^2}\,dx
	= \frac{1}{i}.
$$
We have therefore shown that $|\mathcal{I}_k|<2^{2^{n-1}}/k!$.

Let $\varepsilon\le b_1$ be a constant to be chosen later on.
For every $I\in\mathcal{I}$, 
choose a minimal $\varepsilon$-net $T_I$ for the Euclidean ball in 
$\mathbb{R}^I$ with radius $b_1$, and denote by $T$ the 
union of all these sets $T_I$.  Evidently $T$ is a $\varepsilon$-net for 
$C_{c2^{n/2}/\Sigma}$.  Let us estimate its cardinality.  A standard 
volumetric argument yields \cite[Corollary 4.1.15]{AGM15}
$$
	|T_I| \le \bigg(
	\frac{3b_1}{\varepsilon}
	\bigg)^{|I|}.
$$
We can therefore estimate
$$
	|T| \le
	\sum_{k=0}^d \bigg(
        \frac{3b_1}{\varepsilon}
        \bigg)^{k} |\mathcal{I}_k| <
	2^{2^{n-1}} e^{3b_1/\varepsilon}.
$$
If we choose $\varepsilon = (6/\log 2)\,2^{-n}b_1$, we find that
$|T|<2^{2^n}$ which establishes the claim whenever $2^n\ge 6/\log 2$
(as we assumed that $\varepsilon\le b_1$ in the volumetric estimate).  For 
$2^n<6/\log 2$, simply note the trivial bound 
$e_n(C_{c2^{n/2}/\Sigma})\le\mathrm{diam}(B)\le 2b_1$.
\end{proof}

With this entropy estimate in hand, the proof of Proposition 
\ref{prop:oct} is an immediate consequence of Lemma \ref{lem:octent} and 
Theorem \ref{thm:main} with $a=c/\Sigma$.

\subsection{A counterexample}

The aim of this section is to show that Theorem \ref{thm:main} does 
not always give sharp results.  As the example that we will discuss is a 
conceptually important one, let us briefly consider this example in a 
broader context.

A remarkable consequence of Theorem \ref{thm:mm} is that 
$\gamma_2(\mathop{\mathrm{conv}}(T)) \asymp \gamma_2(T)$ for any 
(non-convex) subset $T\subseteq\mathbb{R}^d$ of Euclidean space:
as the supremum of a linear function over a convex set is attained at
an extreme point, Theorem \ref{thm:mm} yields
$$
	\gamma_2(T)\asymp 
	\mathbf{E}\bigg[
	\sup_{x\in T}\langle x,g\rangle
	\bigg]=
	\mathbf{E}\bigg[
	\sup_{x\in \mathrm{conv}(T)}\langle x,g\rangle
	\bigg]\asymp \gamma_2(\mathop{\mathrm{conv}}(T))
$$
(here $g$ denotes a standard Gaussian vector in $\mathbb{R}^d$). It is a 
long-standing open problem to understand the geometric mechanism behind 
this fundamental fact; cf.\ \cite[\S 2.4]{Tal14}.  By using a known
device \cite[Theorem 2.4.18]{Tal14}, one can reduce this problem to the 
following special case: it suffices to give a geometric proof of the fact 
that for any $x_1,\ldots,x_n\in\mathbb{R}^d$ such that $\|x_1\|\ge 
\|x_2\|\ge\cdots\ge\|x_n\|>0$, we have
$$
	\gamma_2(B) \lesssim \max_{i\le n}\|x_i\|\sqrt{\log(i+1)},\qquad
	\quad
	B = \absconv\{x_i:i=1,\ldots,n\}.
$$
We solved this problem in the previous section under the additional 
assumption that the vectors $x_i$ are orthogonal.  It is not known, 
however, how this conclusion can be established in the absence of the 
orthogonality assumption.  The results of this paper originated in an
attempt by the author to understand this issue.  We will presently 
illustrate that Theorem \ref{thm:main} does not directly resolve this 
problem.

The example that we will consider is defined as follows.  Fix 
$0<\varepsilon<1$ and let $u=d^{-1/2}\mathbf{1}$, where $\mathbf{1}$ is 
the vector of ones (note that $\|u\|=1$). We consider the set
$$
	B = \absconv\{x_i:i=1,\ldots,d\},\qquad\quad
	x_i = e_i + \varepsilon u.
$$
This is a small perturbation of the example in the previous section where 
all vertices of the simplex have been shifted along the diagonal.  One can 
show as in \cite[Exercise 2.2.15]{Tal14} that $\gamma_2(B)\asymp\sqrt{\log 
d}$, while Dudley's bound is of order $(\log d)^{3/2}$.

We claim that Theorem \ref{thm:main} does not improve on Dudley's bound in 
the present setting: the sets $B_t$ are not sufficiently small to gain any 
improvement.  This unfortunate conclusion is contained in the following 
lemma.

\begin{lem}
\label{lem:oops}
We have $B_t\supseteq \mathop{\mathrm{conv}}\{x_i:i=1,\ldots,d\}$
for all $t\ge 1/\varepsilon$.
\end{lem}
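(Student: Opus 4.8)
The plan is to produce a single functional $z\in\mathbb{R}^d$ that norms \emph{every} vertex $x_i$ simultaneously and has Euclidean norm at most $1/\varepsilon$; by the dual description of $B_t$ in Proposition~\ref{prop:dual}(ii), such a $z$ will certify that the entire face $\mathrm{conv}\{x_i\}$ lies inside $B_t$ for all $t\ge 1/\varepsilon$.

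First I would look for $z$ among the multiples of the all-ones vector, a choice forced by the symmetry of the configuration: the requirement $\langle z,x_i\rangle = z_i + \varepsilon d^{-1/2}\sum_j z_j = 1$ for all $i$ compels all coordinates of $z$ to coincide, and solving $c + \varepsilon d^{-1/2}(dc)=1$ yields $z = c\,\mathbf{1}$ with $c = (1+\varepsilon\sqrt d)^{-1}$. Then $\|z\|^* = \|z\|_2 = c\sqrt d = \sqrt d/(1+\varepsilon\sqrt d) < 1/\varepsilon \le t$. Moreover, since $B=\absconv\{x_i\}$, the supremum of a linear functional over $B$ is attained at a vertex, so that $\|z\|_B^* = \max_i|\langle z,x_i\rangle| = 1\le 1$.

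It then remains to verify membership for a general point $y = \sum_i\lambda_i x_i$ of $\mathrm{conv}\{x_i\}$ (with $\lambda_i\ge 0$, $\sum_i\lambda_i=1$). Here $\langle z,y\rangle = \sum_i\lambda_i\langle z,x_i\rangle = 1$; since $y\in B$ we have $\|y\|_B\le 1$, while the duality $\|y\|_B = \sup\{\langle w,y\rangle:\|w\|_B^*\le 1\}$ applied with $w=z$ gives $\|y\|_B\ge\langle z,y\rangle = 1$. Hence $\langle z,y\rangle = \|y\|_B$, and $z$ witnesses $y\in B_t$ via Proposition~\ref{prop:dual}(ii).

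Every step here is a routine computation and I do not expect a genuine obstacle — indeed, the entire content of the lemma is precisely that \emph{no} obstacle occurs, so that $B_t$ fails to shrink below $\mathrm{conv}\{x_i\}$. The only point deserving comment is why the affine conditions $\langle z,x_i\rangle=1$ admit a common solution at all, i.e.\ why $\mathrm{conv}\{x_i\}$ is an exposed face of $B$: this is special to the present perturbation, where the diagonal shift $\varepsilon u$ is the \emph{same} for every vertex and can therefore be absorbed into the single normalizing constant $c$.
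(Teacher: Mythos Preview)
Your proof is correct. Both arguments end up exhibiting the same norming functional $z=(1+\varepsilon\sqrt d)^{-1}\mathbf 1$, but you reach it by a more direct route: you invoke the dual description of $B_t$ from Proposition~\ref{prop:dual}(ii), solve the affine system $\langle z,x_i\rangle=1$ by symmetry, and then verify the three conditions $\langle z,y\rangle=\|y\|_B$, $\|z\|_B^*\le 1$, $\|z\|^*\le t$ by hand. The paper instead introduces the matrix $V$ with columns $x_i$, writes $\|x\|_B=\|V^{-1}x\|_1$, produces a subgradient via the chain rule as $(V^*)^{-1}\mathrm{sign}(V^{-1}x)$, and appeals to Corollary~\ref{cor:grad}; the condition $t\ge 1/\varepsilon$ then emerges from finding $v$ with $\|v\|\le 1$ and $tV^*v=\mathbf 1$. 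Unwinding the paper's computation, one sees that $(V^*)^{-1}\mathbf 1$ is precisely your $z$, so the two proofs are equivalent at the level of objects; your version is shorter and more transparent for this example, while the paper's matrix formulation has the modest advantage of displaying the full subdifferential (and hence the structure of $B_t$ on other faces of $B$, not just the positive one).
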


\begin{proof}
Let $V=\sum_{i=1}^d x_i\otimes e_i$ be the square matrix whose columns 
are the vectors $x_i$.  Note that $V$ is invertible, and we have $\|x\|_B 
= \|V^{-1}x\|_1$.  Therefore
$$
	\partial\|x\|_B =
	(V^*)^{-1}\partial\|V^{-1}x\|_1 \ni
	(V^*)^{-1}\mathop{\mathrm{sign}}(V^{-1}x),
$$
where $\mathop{\mathrm{sign}}(z)$ operates entrywise on a vector $z$ and
we set $\mathop{\mathrm{sign}}(0):=1$.  In particular,
$$
	B_t \supseteq \{x\in B:
	\mathop{\mathrm{sign}}(V^{-1}x)\in tV^*B_\sim\}
$$
by Corollary \ref{cor:grad}, where $B_\sim$ denotes the Euclidean unit 
ball in $\mathbb{R}^d$.

Now note that if $x\in \mathop{\mathrm{conv}}\{x_i:i=1,\ldots,d\}$, then
$V^{-1}x$ has nonnegative entries and thus 
$\mathop{\mathrm{sign}}(V^{-1}x)=\mathbf{1}$.  It therefore suffices to
show that $\mathbf{1}\in tV^*B_\sim$ whenever
$t\ge 1/\varepsilon$.  But this is a simple consequence of the definition 
of $V$, as
$$
	tV^*v = \mathbf{1}\quad\mbox{for}\quad
	v = \frac{u}{t(\varepsilon+d^{-1/2})}
$$
and clearly $\|v\|\le 1$ when $t\ge 1/\varepsilon$.  This completes the 
proof.
\end{proof}

Let $\Delta^{d-1}$ be the standard simplex in $\mathbb{R}^d$.  Lemma 
\ref{lem:oops} shows that $B_t\supseteq \Delta^{d-1}+\varepsilon u$
whenever $t\ge 1/\varepsilon$.  Setting 
$n_{a,\varepsilon}=(2\log_2(1/a\varepsilon))_+$,
we can estimate
$$
	\sum_{n\ge 0} 2^{n/2} e_n(B_{a2^{n/2}}) \ge
	\sum_{n\ge n_{a,\varepsilon}}
	2^{n/2} e_n(\Delta^{d-1}) \gtrsim
	(\log d)^{3/2} -
	Cn_{a,\varepsilon}\sqrt{\log d}
$$
for some constant $C>0$, where we used that $e_n(\Delta^{d-1})\gtrsim 
2^{-n/2}\sqrt{\log d}$ for $n\lesssim\log d$ \cite[Exercise 2.2.15]{Tal14}.
We have therefore shown that Theorem \ref{thm:main} does not improve on 
Dudley's bound in this example unless $\varepsilon$ is polynomially small 
in $d$.

\begin{rem}
Of course, the example described in this section is sufficiently simple 
that we can make some manual adjustments to obtain a sharp geometric 
construction.  Indeed, we clearly have $B\subset B_1+B_2$ where $B_1$ 
denotes the $\ell_1$-ball in $\mathbb{R}^d$ and $B_2=\{\alpha 
u:|\alpha|\le\varepsilon\}$ is one-dimensional. Theorem 
\ref{thm:main} gives a sharp generic chaining construction for $B_1$, 
while a trivial discretization of $\alpha$ suffices to control $B_2$.  We 
can then glue together the generic chaining constructions for $B_1$ and 
$B_2$ by summing the corresponding nets.  It is not clear, however, 
how one could construct such a decomposition in the general setting 
described at the beginning of this section.
\end{rem}

\section{Geometry and Entropy Contraction}
\label{sec:geom}

In the previous section, we illustrated the utility of Theorem 
\ref{thm:main} in specific examples.  The computations hinge, however, on 
a sufficiently explicit description of the sets $B_t$, which may not 
always be available in more general situations. For example, if we 
consider the examples of the previous section under general norms, it may 
be nontrivial to control the sets $B_t$ directly.  It is therefore of 
interest to develop more systematic methods to control the geometry of the 
sets $B_t$.

As a prototype of what one might hope for, let us reconsider the setting 
of $\ell_q$-ellipsoids in Hilbert space.  Theorem \ref{thm:main} bounds 
$\gamma_2(B)$ in terms of the entropy numbers of the sets $B_t$, which we 
computed explicitly in section \ref{sec:lqellips}.  However, Lemma 
\ref{lem:carl} suggests that the correct behavior of $\gamma_2(B)$ in this 
example can also be expressed in terms of the entropy numbers of $B$ 
itself: we easily verify that
$$
	\gamma_2(B) \asymp \Bigg[
	\sum_{n\ge 0}\big(2^{n/2}e_n(B)\big)^{q/(q-1)}
	\Bigg]^{(q-1)/q}.
$$
The appearance of such a bound is not a coincidence. Talagrand has shown 
that an upper bound of this form holds for any $q$-convex set $B$ \cite[\S 
4.1]{Tal14}: as $\ell_q$-ellipsoids are $\max(2,q)$-convex, this provides 
an alternative explanation for the behavior of $\ell_q$-ellipsoids in the 
case $2\le q<\infty$. One of the insights to be developed in this section 
is that this fundamental property of $q$-convex sets is fully explained by 
Theorem \ref{thm:main}.  Roughly speaking, we will show that the 
$q$-convexity assumption forces the sets $B_t$ to be much smaller than $B$ 
itself
in the sense that $e_n(B_t)\lessapprox t^{1/(q-1)}e_n(B)^{q/(q-1)}$, from 
which the above bound is easily deduced.  More generally, this phenomenon 
suggests that the chaining principle for general convex sets given by
Theorem \ref{thm:main} can be significantly simplified in the 
presence of additional geometric structure.

It turns out that there is nothing special about $q$-convexity per se, but 
that the entropy contraction phenomenon illustrated above arises from a 
much more general geometric mechanism.  We develop a general formulation 
of this idea in section \ref{sec:geompr}.  We then demonstrate how the 
requisite structure arises in two distinct settings: the case of 
$q$-convex sets is developed in section \ref{sec:uniconv}, while the case 
of $\ell_q$-balls in Banach spaces with an unconditional basis is 
developed in section \ref{sec:lquncond}.

\subsection{A geometric principle}
\label{sec:geompr}

Let $(X,\|\cdot\|)$ be a Banach space and let $B\subset X$ be a symmetric 
compact convex set.  The sets $B_t$ are defined as in Theorem 
\ref{thm:main}.  The following geometric principle is the main result of 
this section.

\begin{thm}
\label{thm:geompr}
Let $q>1$ and $K>0$ be given constants, and suppose that
$$
        \|y-z\|_B^q  \le Kt\|y-z\|\quad\mbox{for every }y,z\in B_t,~
        t\ge 0.
$$
Then
$$
	\gamma_p(B) \lesssim
	\Bigg[
	\sum_{n\ge 0} \big(2^{n/p} e_n(B)\big)^{q/(q-1)}
	\Bigg]^{(q-1)/q},
$$
where the universal constant depends on $p$, $q$, and $K$ only.
\end{thm}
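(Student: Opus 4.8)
The plan is to deduce the estimate from Corollary~\ref{cor:main} by showing that the hypothesis forces an \emph{entropy contraction} for the sets $B_t$, and then optimizing over the free parameter.  Corollary~\ref{cor:main} gives $\gamma_p(B)\lesssim_p\frac1a+\sum_{n\ge0}2^{n/p}e_n(B_{a2^{n/p}})$ for every $a>0$, so the whole point is to prove that
$$
	e_n(B_t)\ \lesssim_{q,K}\ t^{1/(q-1)}\,e_n(B)^{q/(q-1)}
$$
(at least in a form robust enough to survive the summation).  Granting this, put $t=a2^{n/p}$ and write $\Phi:=\sum_n(2^{n/p}e_n(B))^{q/(q-1)}$: the sum becomes $\lesssim_{q,K}a^{1/(q-1)}\Phi$, and minimizing $\frac1a+a^{1/(q-1)}\Phi$ over $a$ (the optimum is $a\asymp_q\Phi^{-(q-1)/q}$) yields exactly $\gamma_p(B)\lesssim_{p,q,K}\Phi^{(q-1)/q}$, which is the claim.

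For the entropy contraction I would build a $\|\cdot\|$-net of $B_t$ by \emph{iterated refinement}, using the hypothesis in its equivalent form $\|y-z\|_B\le(Kt\|y-z\|)^{1/q}$ on $B_t$, i.e.\ \emph{a set of small $\|\cdot\|$-diameter meets $B_t$ in a set of small $\|\cdot\|_B$-diameter}.  After the legitimate normalization $\mathrm{diam}(B)\le1$ -- both the conclusion and the hypothesis are invariant under $B\mapsto\lambda B$ in view of $(\lambda B)_t=\lambda B_{\lambda t}$ -- cover $B_t$ first by a single $\|\cdot\|$-ball of radius $r_0:=\min\{\mathrm{rad}\,B,(Kt\,\mathrm{diam}\,B)^{1/q}\mathrm{rad}\,B\}$ (using that $B_t\subseteq B$ has $\|\cdot\|_B$-diameter $\le(Kt\,\mathrm{diam}\,B)^{1/q}$; this is what makes the bound non-trivial for small $t$), and for $k=1,\dots,n-1$ refine each current cell $E$: since $E\cap B_t$ has $\|\cdot\|$-diameter $\le2r_{k-1}$ it has $\|\cdot\|_B$-diameter $\le(2Ktr_{k-1})^{1/q}$, hence lies in a translate of $(2Ktr_{k-1})^{1/q}B$, which a scaled level-$k$ net of $B$ covers by $\le2^{2^k}$ $\|\cdot\|$-balls of radius $r_k\le2(2Ktr_{k-1})^{1/q}e_k(B)$ (if $(2Ktr_{k-1})^{1/q}\ge2$ use $E\cap B_t\subseteq 2B$ instead; the same bound on $r_k$ holds).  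After $n-1$ steps there are $\prod_{k=1}^{n-1}2^{2^k}=2^{2^n-2}<2^{2^n}$ cells, so $e_n(B_t)\le r_{n-1}$, and unrolling $r_k\le2(2K)^{1/q}t^{1/q}e_k(B)\,r_{k-1}^{1/q}$ gives
$$
	e_n(B_t)\ \lesssim_{q,K}\ t^{\theta_n}\prod_{0\le k\le n-1}e_k(B)^{\alpha_{k,n}},
$$
with $\theta_n=\tfrac{1-q^{-n}}{q-1}$, $\alpha_{k,n}=q^{-(n-1-k)}$ for $1\le k\le n-1$ and $\alpha_{0,n}=(1+q^{-1})q^{-(n-1)}$, so that $\theta_n<\tfrac1{q-1}$ and $\sum_k\alpha_{k,n}<\tfrac{q}{q-1}$, both approaching their limits geometrically in $n$; this weighted-geometric-mean bound is the robust substitute for $t^{1/(q-1)}e_n(B)^{q/(q-1)}$.

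Inserting this into Corollary~\ref{cor:main} and summing over $n$: with $a_k:=2^{k/p}e_k(B)$ and $t=a2^{n/p}$, a bookkeeping of the powers of $2$ shows that the exponent of $2$ in $2^{n/p}e_n(B_{a2^{n/p}})$ collapses to a bounded quantity, leaving $2^{n/p}e_n(B_{a2^{n/p}})\lesssim_{p,q,K}a^{\theta_n}\prod_{k\le n-1}a_k^{\alpha_{k,n}}$.  Bounding $a^{\theta_n}\le a^{1/(q-1)}$ when $a\ge1$ (the complementary case $a<1$, in which $a\asymp\Phi^{-(q-1)/q}$ is bounded below, is handled by an elementary direct estimate), one is reduced to $\sum_n\prod_{k\le n-1}a_k^{\alpha_{k,n}}\lesssim_q\sum_k a_k^{q/(q-1)}$.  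This in turn follows from the weighted AM--GM estimate $\prod_k a_k^{\alpha_{k,n}}\lesssim_q\sum_k\alpha_{k,n}a_k^{q/(q-1)}$ (which uses that $\sum_k\alpha_{k,n}$ is bounded, and bounded away from $0$ for all $n$ outside a short initial range treated directly) followed by interchanging the order of summation, since $\sum_n\alpha_{k,n}=\tfrac q{q-1}$.  The main obstacle is the iterated-refinement construction of the second paragraph -- driving the radius down along the recursion while keeping the total cardinality below $2^{2^n}$ at every scale -- together with the bookkeeping that lets the weighted-geometric-mean bound stand in for the clean power $t^{1/(q-1)}e_n(B)^{q/(q-1)}$; the initial reduction and the final optimization over $a$ are routine.
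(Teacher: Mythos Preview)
Your strategy---start from Corollary~\ref{cor:main}, prove an entropy-contraction estimate for $B_t$ via the hypothesis, substitute, and optimize over $a$---is exactly the paper's plan, and the core geometric observation is the same: if a subset of $B_t$ has $\|\cdot\|$-diameter $r$, then by the hypothesis it has $\|\cdot\|_B$-diameter at most $(Ktr)^{1/q}$, so it sits in a translate of $(Ktr)^{1/q}B$ and can be refined by a scaled net of $B$.  The paper, however, applies this observation only \emph{once}, obtaining the recursion $e_{n+1}(B_t)\le(Kt\,e_n(B_t))^{1/q}e_n(B)$ (its Lemma~\ref{lem:genprent}), and then closes it by introducing the regularized numbers $d_n(C):=\max_{k\le n}2^{\lambda(k-n)}e_k(C)$ with $\lambda>q/(q-1)p$.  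This yields directly $d_n(B_t)\lesssim 2^{-\lambda n}\mathrm{diam}(B)+t^{1/(q-1)}\max_{k\le n}2^{\lambda(k-n)}e_k(B)^{q/(q-1)}$ with the \emph{exact} exponent $t^{1/(q-1)}$; the summation and the optimization over $a$ are then a routine computation producing $\Phi^{(q-1)/q}+\mathrm{diam}(B)$, and the diameter term is absorbed.  Your implementation instead iterates the refinement at all levels $1,\dots,n-1$, unrolls the recursion into a weighted geometric mean of $e_0(B),\dots,e_{n-1}(B)$ with $t$-exponent $\theta_n=\tfrac{1-q^{-n}}{q-1}$, and finishes by AM--GM and interchanging the order of summation.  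The bookkeeping on the powers of $2$ and on $\sum_k\alpha_{k,n}$, $\sum_n\alpha_{k,n}$ is correct.

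The genuine gap is in your treatment of the deficit $\theta_n<\tfrac1{q-1}$.  You write that when the optimal $a\asymp\Phi^{-(q-1)/q}$ is $<1$ it is ``bounded below'', but this is precisely the interesting regime: $\Phi$ large forces $a$ small, and then $a^{\theta_n}=a^{1/(q-1)}\cdot a^{-cq^{-n}}$ contributes a factor $\Phi^{c'q^{-n}}$ that is $\ge1$ for every $n$, so the sum $\sum_n a^{\theta_n}G_n$ is no longer controlled by $a^{1/(q-1)}\Phi$ in the way you need.  Your parenthetical ``if $(2Ktr_{k-1})^{1/q}\ge 2$ use $E\cap B_t\subseteq 2B$ instead'' is exactly the right repair---at each step one should take the minimum of the iterated and trivial refinements---but its effect is not propagated through the unrolled formula for $\theta_n$, $\alpha_{k,n}$, or the AM--GM step; carrying this through would amount to choosing an $n$-dependent starting level $m=m(t)$ and re-doing the exponent bookkeeping.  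Either do that, or, more simply, perform only one refinement step (your recursion with $m=n-1$) to get $e_{n+1}(B_t)\lesssim (t\,e_n(B_t))^{1/q}e_n(B)$, and then use the paper's dyadic regularization to extract the clean $t^{1/(q-1)}$; this sidesteps the $\theta_n$ issue entirely.
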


Like Theorem \ref{thm:main}, the message of Theorem \ref{thm:geompr} is 
that the behavior of $\gamma_p(B)$ is strictly better than would be 
expected from Dudley's bound.  Unlike Theorem \ref{thm:main}, however, the 
presence of additional geometric structure allows us to bound 
$\gamma_p(B)$ only in terms of the entropy numbers of $B$ itself. This 
bound could therefore be applied even without an explicit description of 
$B_t$.  Of course, there is no free lunch: the assumption of Theorem 
\ref{thm:geompr} requires us to understand the metric structure of the 
sets $B_t$.  Fortunately, we will see in the sequel that there are 
interesting situations in which this can be accomplished without 
explicitly computing the sets $B_t$.

\begin{rem}
\label{rem:geomintu}
Before we turn to the proof of Theorem \ref{thm:geompr}, it is instructive 
to consider the significance of the geometric assumption of 
Theorem \ref{thm:geompr}.  Observe that we always have, regardless of any 
assumptions, the following simple fact:
$$
	\|y\|_B \le t\|y\|\quad\mbox{for every }y\in B_t,~t\ge 0.
$$
Indeed, if $z\in X^*$ is as in the definition of $B_t$, then
$$
	\|y\|_B = \langle z,y\rangle \le \|z\|^*\|y\| \le
	t\|y\|.
$$
We therefore see that by construction, an element $y\in B_t$ with small 
norm must be contained in a small dilation $y\in t\|y\|B$ of the 
original set $B$.  The assumption of Theorem \ref{thm:geompr} asks that a 
weaker form of this property hold not only for norms, but also for 
distances: that is, if $y,z\in B_t$, then $y-z\in (Kt\|y-z\|)^{1/q}B$.  
This does not follow automatically from the corresponding property for 
norms, as it is typically not true that $B_t-B_t\subseteq cB_{ct}$ for 
some constant $c$.  Nonetheless, this intuition proves to be useful as it 
will help us identify how the requisite geometric structure arises.
\end{rem}

The main idea behind the proof of Theorem \ref{thm:geompr} is the
following observation.

\begin{lem}
\label{lem:genprent}
Suppose that the assumption of Theorem \ref{thm:geompr} holds.  Then
$$
	e_{n+1}(B_t) \le
	(K t\, e_n(B_t))^{1/q}e_n(B)
	\quad\mbox{for every }n\ge 0,~t\ge 0.
$$
\end{lem}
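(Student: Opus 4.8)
The plan is to combine two covering constructions: an $e_n(B_t)$-net of $B_t$ itself (to localize $y$ and $z$ within $B_t$), together with a net adapted to the geometric hypothesis. The key point is that the assumption $\|y-z\|_B^q \le Kt\|y-z\|$ says that any two points of $B_t$ that are close in $\|\cdot\|$ are also close in $\|\cdot\|_B$; and closeness in $\|\cdot\|_B$ means the difference lies in a small dilate of $B$, which we can in turn cover finely using $e_n(B)$. So the strategy is: first cover $B_t$ by $N_1 < 2^{2^n}$ balls of radius $\approx e_n(B_t)$ in $\|\cdot\|$; this localizes each $y\in B_t$ to within a translate of $(e_n(B_t)B_\sim)\cap B_t$ (up to constants). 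On such a piece, the hypothesis forces the $\|\cdot\|_B$-diameter to be at most $\approx (Kt\,e_n(B_t))^{1/q}$, so the piece is contained in a translate of $(Kt\,e_n(B_t))^{1/q}B$; cover this dilate of $B$ by $N_2 < 2^{2^n}$ balls of radius $\approx (Kt\,e_n(B_t))^{1/q}e_n(B)$. The product net has cardinality $N_1 N_2 < 2^{2^{n+1}}$ and mesh $\approx (Kt\,e_n(B_t))^{1/q}e_n(B)$ in $\|\cdot\|$, which is the desired bound on $e_{n+1}(B_t)$.

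The steps, in order, would be: (1) Fix $n$ and $t$; choose a set $\tilde A$ with $|\tilde A| < 2^{2^n}$ and $\sup_{y\in B_t} d(y,\tilde A) \le 2 e_n(B_t)$, and discard elements of $\tilde A$ whose neighborhood misses $B_t$, so that each $y\in B_t$ has a nearby center $a$ with $y - a \in B_t - B_t$ and $\|y - a\| \le 2e_n(B_t)$. (2) Apply the hypothesis: since $y, a'$ for $y, a' \in B_t$ with $\|y - a'\| \le 4 e_n(B_t)$ satisfy $\|y - a'\|_B^q \le Kt\|y-a'\| \le 4Kt\,e_n(B_t)$, the relevant local piece of $B_t$ sits inside a translate of $r B$ with $r := (4Kt\,e_n(B_t))^{1/q}$. (3) Cover $rB$ by a set of cardinality $< 2^{2^n}$ and $\|\cdot\|$-mesh $\le 2r\,e_n(B) = 2(4Kt\,e_n(B_t))^{1/q}e_n(B)$, using the definition of $e_n(B)$ and scaling. (4) Take all sums (center from step 1) $+$ (net point from step 3); this has fewer than $2^{2^n}\cdot 2^{2^n} = 2^{2^{n+1}}$ points and $\|\cdot\|$-mesh controlled by $\lesssim (Kt\,e_n(B_t))^{1/q}e_n(B)$, absorbing the numerical constants into the $\lesssim$ if one wishes, or tracking them to get the clean stated inequality.

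The main obstacle — really the only subtlety — is the bookkeeping around comparing points of $B_t$ that are *near* a common center versus points that *are* in $B_t$: the hypothesis is stated for pairs $y,z \in B_t$, but the net centers $a$ need not lie in $B_t$. This is handled by the standard device of replacing each center $a$ by an actual point $a' \in B_t$ within distance $2e_n(B_t)$ of it (possible by the defining property of the net after pruning), so that two points $y,z \in B_t$ assigned to the same center differ by at most $4e_n(B_t)$ in $\|\cdot\|$, hence by at most $(4Kt\,e_n(B_t))^{1/q}$ in $\|\cdot\|_B$; the extra factor of $4^{1/q} \le 4$ is harmless. One must also note the edge case $e_n(B_t) = 0$ (then $B_t$ is already a single point up to the net tolerance and the bound is trivial) and the fact that $e_{n+1} \le e_n$ lets us freely upgrade $2^{2^n}\cdot 2^{2^n} = 2^{2^{n+1}}$. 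None of these requires more than routine care, which is why the lemma is genuinely the crux: once it is in hand, iterating it against the trivial bound $e_0(B_t) \le \mathrm{diam}(B)$ and feeding the result into Theorem \ref{thm:main} will give Theorem \ref{thm:geompr}.
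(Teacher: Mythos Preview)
Your proposal is correct and follows essentially the same two-stage covering argument as the paper: first cover $B_t$ at scale $e_n(B_t)$, use the hypothesis to conclude each piece lies in a small dilate of $B$, then refine using $e_n(B)$, for a total of fewer than $2^{2^{n+1}}$ balls. The only cosmetic difference is that the paper takes nets of radius $(1+\varepsilon)e_n(B_t)$ and $(1+\varepsilon)e_n(B)$ and lets $\varepsilon\downarrow 0$ at the end, which is what produces the constant-free inequality as stated; your factor-$2$ nets yield the same bound up to a harmless universal constant (which, as you note, is all that is needed for Theorem~\ref{thm:geompr}).
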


\begin{proof}
Fix $\varepsilon>0$. By the definition 
of entropy numbers, we can cover $B_t$ by less than $2^{2^n}$ balls of 
radius $(1+\varepsilon)e_n(B_t)$.  By our assumption, each of these balls 
(intersected with $B_t$) is contained in a translate of $sB$ with $s\le 
(1+\varepsilon)^{1/q}(Kt\,e_n(B_t))^{1/q}$.  Therefore, each of these 
balls can be further covered by less than $2^{2^n}$ balls of radius 
$(1+\varepsilon)s\, e_n(B)$.  We have now covered $B_t$ by less than 
$2^{2^n}\cdot 2^{2^n} = 2^{2^{n+1}}$ balls of radius $\le 
(1+\varepsilon)^{1+1/q}(Kt\,e_n(B_t))^{1/q}e_n(B)$.  Letting 
$\varepsilon\downarrow 0$ completes the proof.
\end{proof}

An annoying feature of Lemma \ref{lem:genprent} is that the entropy number 
on the left-hand side is $e_{n+1}(B_t)$ rather than $e_n(B_t)$.  If it 
were the case that $e_n(B_t)\lesssim e_{n+1}(B_t)$ (that is, if we knew 
\emph{a priori} that the entropy numbers do not decay too quickly), then 
we could simplify the conclusion of Lemma \ref{lem:genprent} to
$$
	e_{n}(B_t) \lesssim
	t^{1/(q-1)}e_n(B)^{q/(q-1)}.
$$
This expression quantifies in the present setting in what sense the sets 
$B_t$ are much smaller than the original set $B$.
From this expression, it would be easy to conclude the result of Theorem 
\ref{thm:geompr}: substituting the above bound into Theorem 
\ref{thm:main} yields
$$
	\gamma_p(B) \lesssim
	\frac{1}{a} + a^{1/(q-1)}
	\sum_{n\ge 0} \big(2^{n/p}e_n(B)\big)^{q/(q-1)},
$$
and the conclusion of Theorem \ref{thm:geompr} would follow
by optimizing over $a>0$.
The main technical issue in the proof of Theorem \ref{thm:geompr} is to 
show that its conclusion remains valid even when the regularity assumption 
$e_n(B_t)\lesssim e_{n+1}(B_t)$ does not hold, which we do by means of a 
routine dyadic regularization argument.

\begin{proof}[Proof of Theorem \ref{thm:geompr}]
Fix a constant $\lambda>0$ to be chosen at a later stage.  For any set 
$C$, we introduce the regularized entropy numbers $d_n(C)\ge e_n(C)$ as
$$
	d_n(C) := \max_{0\le k\le n} 2^{\lambda(k-n)}e_k(C).
$$
Using Lemma \ref{lem:genprent}, we estimate
\begin{align*}
	d_n(B_t) &\le \max_{0\le k\le n+1} 
	2^{\lambda(k-n)}e_k(B_t) \\
	&\le
	2^{-\lambda n}\mathrm{diam}(B) +
	2^{\lambda} \max_{0\le k\le n}
        2^{\lambda(k-n)}e_{k+1}(B_t) \\
	&\lesssim
	2^{-\lambda n}\mathrm{diam}(B) +
	2^{\lambda}t^{1/q}\max_{0\le k\le n}
        2^{\lambda(k-n)}e_k(B_t)^{1/q}e_k(B) \\
	&\le
	2^{-\lambda n}\mathrm{diam}(B) +
	2^{\lambda}t^{1/q}d_n(B_t)^{1/q}
	\max_{0\le k\le n}
        2^{\lambda(k-n)(q-1)/q}e_k(B).
\end{align*}
Therefore, using $a^{1/q}b^{(q-1)/q} \le a/q + b(q-1)/q$, we obtain
$$
	d_n(B_t) \lesssim
	2^{-\lambda n}\mathrm{diam}(B)
	+
	2^{\lambda q/(q-1)}
	t^{1/(q-1)}
        \max_{0\le k\le n}
        2^{\lambda(k-n)}e_k(B)^{q/(q-1)}.
$$
In particular, we can crudely bound
\begin{multline*}
	\sum_{n\ge 0} 2^{n/p} e_n(B_{a2^{n/p}}) 
	\lesssim
	\mathrm{diam}(B)
	\sum_{n\ge 0} 2^{n/p}2^{-\lambda n} + \mbox{} \\ 
	a^{1/(q-1)}
	2^{\lambda q/(q-1)}
	\sum_{n\ge 0} 
	2^{nq/(q-1)p} 2^{-\lambda n}
        \sum_{0\le k\le n}
        2^{\lambda k}e_k(B)^{q/(q-1)}.
\end{multline*}
In order for the sums to converge we must choose $\lambda > q/(q-1)p$, so 
we fix for concreteness $\lambda=2q/(q-1)p$ (the precise value of 
$\lambda$ does not matter).  This yields
\begin{align*}
	&\sum_{n\ge 0} 2^{n/p} e_n(B_{a2^{n/p}}) \\
	&\lesssim
	\mathrm{diam}(B) +
	a^{1/(q-1)}
	\sum_{n\ge 0}
        2^{-nq/(q-1)p} 
        \sum_{0\le k\le n}
        \big(2^{2k/p}e_k(B)\big)^{q/(q-1)} \\
	& =
        \mathrm{diam}(B) +
        a^{1/(q-1)}  
        \sum_{k\ge 0} 
        \sum_{n\ge k} 
        2^{-nq/(q-1)p}
        \big(2^{2k/p}e_k(B)\big)^{q/(q-1)} \\
	& \lesssim
        \mathrm{diam}(B) +
        a^{1/(q-1)}
	\sum_{k\ge 0}
        \big(2^{k/p}e_k(B)\big)^{q/(q-1)}.
\end{align*}
Applying Corollary \ref{cor:main} and optimizing over $a>0$ yields
$$
	\gamma_p(B) \lesssim
	\mathrm{diam}(B) + 
	\Bigg[
	\sum_{n\ge 0} \big(2^{n/p} e_n(B)\big)^{q/(q-1)}
	\Bigg]^{(q-1)/q}.
$$
It remains to note that $\mathrm{diam}(B)\le 2e_0(B)$, so that the first 
term can be absorbed in the second at the expense of the universal 
constant.
\end{proof}

\begin{rem}
An inspection of the proof shows that the universal constant in Theorem 
\ref{thm:geompr} blows up as $q\downarrow 1$.  It would be interesting to 
understand whether there is an analogue of Theorem \ref{thm:geompr} that 
holds in the limiting case $q=1$: that is, whether there is a general 
geometric mechanism that ensures the sharp bound
$$
	\gamma_p(B) \asymp \sup_{n\ge 0}2^{n/p}e_n(B)
$$
(that the right-hand side is a lower bound on $\gamma_p(B)$ is trivial).
This situation is illustrated by the example of section \ref{sec:oct}: in 
this case both the assumption and the conclusion of Theorem 
\ref{thm:geompr} hold for $q=1$ (the assumption holds by Remark 
\ref{rem:geomintu} and $B_t-B_t\subseteq 2B_{\sqrt{2}t}$, while the 
conclusion can be deduced from \cite[Exercise 2.2.15]{Tal14}), but 
Theorem \ref{thm:geompr} is not sufficiently sharp to capture 
this example. \end{rem}

\subsection{Uniformly convex sets}
\label{sec:uniconv}

In this section, we exhibit an important situation where the assumption of 
Theorem \ref{thm:geompr} can be verified by imposing additional geometric 
structure on the set $B$: we show that the assumption holds when $B$ is 
$q$-convex.  This recovers a fundamental result of Talagrand
\cite[\S 4.1]{Tal14}.

Let $(X,\|\cdot\|)$ be any Banach space, and let $B\subset X$ be a 
symmetric convex set.  As usual, we denote by $\|\cdot\|_B$ the gauge of 
$B$.  We recall the following definition.
        
\begin{defn}
Let $q\ge 2$. A symmetric convex set $B$ is called \emph{$q$-convex} if
$$
        \bigg\|\frac{x+y}{2}\bigg\|_B \le 1-\eta\|x-y\|_B^q
$$
for all $x,y\in B$, where $\eta>0$ is an absolute constant.
\end{defn}

We will prove the following result.
  
\begin{cor}[\cite{Tal14}]
\label{cor:qconvex}
Let $B$ be a symmetric convex set in a Banach space $(X,\|\cdot\|)$, and
assume that $B$ is $q$-convex (with constant $\eta$).  Then
$$
        \gamma_p(B) \lesssim
        \Bigg[
        \sum_{n\ge 0} \big(2^{n/p} e_n(B)\big)^{q/(q-1)}
        \Bigg]^{(q-1)/q},
$$
where the universal constant depends on $p$, $q$, and $\eta$ only.
\end{cor}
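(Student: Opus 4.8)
The plan is to deduce Corollary \ref{cor:qconvex} directly from Theorem \ref{thm:geompr} by verifying that $q$-convexity of $B$ implies the metric hypothesis
$$
	\|y-z\|_B^q \le Kt\,\|y-z\|\quad\text{for all }y,z\in B_t,\ t\ge 0,
$$
with a constant $K$ depending only on $q$ and $\eta$. Once this is established, the conclusion is immediate from Theorem \ref{thm:geompr}. So the entire task reduces to this one geometric inequality.

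The first step is to unwind what it means, in the $q$-convex setting, for $y,z$ to both lie in $B_t$. By Proposition \ref{prop:dual}, there exist $z_y,z_z\in X^*$ with $\langle z_y,y\rangle=\|y\|_B$, $\langle z_z,z\rangle=\|z\|_B$, $\|z_y\|_B^*,\|z_z\|_B^*\le 1$, and $\|z_y\|^*,\|z_z\|^*\le t$. The idea is then to test the $q$-convexity inequality on the pair $y,z$ (after rescaling into $B$ if necessary — note $\|y\|_B,\|z\|_B\le 1$ automatically). Writing the $q$-convexity inequality in the gauge $\|\cdot\|_B$, one gets $\|\tfrac{y+z}{2}\|_B \le 1 - \eta\|y-z\|_B^q$, but more useful is its local consequence: $q$-convexity implies a lower bound on how much $\|\cdot\|_B$ increases as one moves from the ``midpoint'' toward $y$ or $z$, i.e. a quantitative strict convexity that forces the norming functionals $z_y$ and $z_z$ to differ by at least a constant times $\|y-z\|_B^{q-1}$ in the dual gauge $\|\cdot\|_B^*$. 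Concretely, one should prove a lemma of the form: if $\langle z_y,y\rangle = \|y\|_B$ and $\langle z_z,z\rangle = \|z\|_B$ with $\|z_y\|_B^*,\|z_z\|_B^*\le 1$, then $\langle z_y - z_z,\, y-z\rangle \ge c_{q,\eta}\,\|y-z\|_B^{q}$. This is the standard duality between $q$-convexity of $B$ and the corresponding $q'$-smoothness-type estimate on $B^\circ$, and it is where the hypothesis $q\ge 2$ and the constant $\eta$ enter.

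Granting that lemma, the metric bound follows in one line:
$$
	c_{q,\eta}\,\|y-z\|_B^{q} \le \langle z_y - z_z,\, y-z\rangle \le \|z_y - z_z\|^*\,\|y-z\| \le 2t\,\|y-z\|,
$$
since $\|z_y\|^*,\|z_z\|^*\le t$. This is exactly the hypothesis of Theorem \ref{thm:geompr} with $K = 2/c_{q,\eta}$, and the corollary follows.

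The main obstacle is proving the duality lemma $\langle z_y-z_z, y-z\rangle \gtrsim_{q,\eta} \|y-z\|_B^q$ cleanly from the definition of $q$-convexity as stated (a one-sided inequality at midpoints, valid for $x,y\in B$). The subtlety is that the definition only controls $\|\tfrac{x+y}{2}\|_B$ when both endpoints are on the boundary (or inside $B$), so to extract an inequality about norming functionals at arbitrary $y,z$ one wants to apply it along the segment and combine with the support-functional characterization; alternatively, one can first show that $q$-convexity of $B$ is equivalent to the inequality $\|x+y\|_B^q + \|x-y\|_B^q/(\text{const}) \le \cdots$ type estimate, or more simply pass to the fact that $q$-convexity implies $\|\tfrac{x+y}{2}\|_B^q \le \tfrac12\|x\|_B^q + \tfrac12\|y\|_B^q - \eta'\|x-y\|_B^q$ for a possibly smaller $\eta'$, which is the form that dualizes transparently. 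Handling the homogenization — reducing from the case $\|y\|_B=\|z\|_B=1$ to general $y,z\in B_t$ with possibly small gauge — requires a small amount of care but is routine scaling. I expect that once the right reformulation of $q$-convexity is in hand, the rest is a two-line computation as above.
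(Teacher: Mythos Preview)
Your overall plan---verify the metric hypothesis of Theorem \ref{thm:geompr} by a duality argument on the norming functionals of points in $B_t$---is exactly the paper's second proof of Lemma \ref{lem:qconvbt}. However, the specific duality lemma you write down is false as stated. For norming functionals $z_y,z_z$ of $\|\cdot\|_B$ (that is, $\langle z_y,y\rangle=\|y\|_B$ with $\|z_y\|_B^*\le 1$), the inequality $\langle z_y-z_z,\,y-z\rangle\gtrsim_{q,\eta}\|y-z\|_B^q$ cannot hold: take $y$ and $z$ to be distinct positive multiples of the same vector, so that $z_y=z_z$ while $\|y-z\|_B>0$. The homogeneity is wrong---norming functionals for $\|\cdot\|_B$ are invariant under positive scaling of the argument, so they cannot detect radial distance.

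The fix, which is precisely what the paper does (citing \cite{Xu91}), is to use the duality map for $\|\cdot\|_B^q/q$ instead: set $j_y:=z_y\,\|y\|_B^{q-1}\in\partial(\|y\|_B^q/q)$. The correct equivalent characterization of $q$-convexity is $\langle j_y-j_z,\,y-z\rangle\gtrsim_{q,\eta}\|y-z\|_B^q$. Since $\|y\|_B,\|z\|_B\le 1$ on $B_t$ one still has $\|j_y\|^*,\|j_z\|^*\le t$, and your one-line conclusion then goes through unchanged with $j_y,j_z$ in place of $z_y,z_z$. Your remark about first passing to the inequality $\|\tfrac{x+y}{2}\|_B^q\le\tfrac12\|x\|_B^q+\tfrac12\|y\|_B^q-\eta'\|x-y\|_B^q$ is pointing in exactly this direction, since that is the form whose subdifferential yields the Xu-type bound.

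It is also worth noting that the paper gives a shorter first proof of Lemma \ref{lem:qconvbt} that avoids the Xu inequality entirely: for $y\in B_t$ one has $\pi_t(y)=y$ by Proposition \ref{prop:dual}, hence $\|y\|_B=K(t,y)\le\|\tfrac{y+z}{2}\|_B+t\,\|\tfrac{y-z}{2}\|$; rescaling by $\gamma=\|y\|_B\vee\|z\|_B\le 1$ and applying the $q$-convexity definition directly to $y/\gamma,z/\gamma\in B$ gives the bound without any dual reformulation.
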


To connect this result to the explicit computations in section 
\ref{sec:lqellips}, we recall that $\ell_q$-ellipsoids are 
$\max(2,q)$-convex \cite{Bea82}.  This shows that the case $2\le q<\infty$ 
of Proposition \ref{prop:lqellips} is in fact a manifestation of the much 
more general phenomenon described by Corollary \ref{cor:qconvex}: we 
emphasize that the present result requires no assumption of any kind on 
the norm $\|\cdot\|$.  On the other hand, it is impossible for a convex 
set to be $q$-convex with $q<2$ (Hilbert space is maximally convex),
so that uniform convexity cannot explain the behavior of 
$\ell_q$-ellipsoids for $q<2$.  We will nonetheless see in the next 
section that the latter case can also be understood as a manifestation of
the general geometric principle described by Theorem \ref{thm:geompr}.

We prove Corollary \ref{cor:qconvex} by verifying the assumption of 
Theorem \ref{thm:geompr}.

\begin{lem}
\label{lem:qconvbt}
Let $B$ be a $q$-convex set and $t\ge 0$.  Then
$$
	\|y-z\|_B^q  \lesssim t\|y-z\|\quad\mbox{for every }y,z\in B_t,
$$
where the universal constant depends on $q$ and $\eta$ only.
\end{lem}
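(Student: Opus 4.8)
The plan is to verify the geometric hypothesis of Theorem~\ref{thm:geompr}, i.e.\ this lemma, for a $q$-convex set $B$. Fix $t>0$ (the case $t=0$ being trivial) and $y,z\in B_t$. By Proposition~\ref{prop:dual}(ii) we may choose $z_1,z_2\in X^*$ with $\langle z_1,y\rangle=\|y\|_B$, $\|z_1\|_B^*\le1$, $\|z_1\|^*\le t$, and similarly with $(z_2,z)$ in place of $(z_1,y)$. The first two properties say exactly that $z_1$ is a norming functional for $y$, i.e.\ $z_1\in\partial\|y\|_B$ (cf.\ the proof of Corollary~\ref{cor:grad}); by the chain rule for convex functions this gives $\|y\|_B^{q-1}z_1\in\partial g(y)$, where $g:=\tfrac1q\|\cdot\|_B^{\,q}$, and likewise $\|z\|_B^{q-1}z_2\in\partial g(z)$. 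Note also that $\|y\|_B,\|z\|_B\le1$ since $y,z\in B$, hence $\|y\|_B^{q-1},\|z\|_B^{q-1}\le1$ as $q>1$.

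The one nonelementary ingredient is the classical fact that $q$-convexity of the norm $\|\cdot\|_B$ is inherited, in power-type form, by $g$: there is a constant $c=c(q,\eta)>0$ such that $g(v)\ge g(u)+\langle\xi,v-u\rangle+c\,\|u-v\|_B^{\,q}$ for all $u,v\in X$ and all $\xi\in\partial g(u)$. This is deduced from the definition of $q$-convexity by a routine homogeneity argument that reduces the general case to that of $x,y$ lying on the unit sphere of $\|\cdot\|_B$; it includes the degenerate case $u=0$, where $\partial g(0)=\{0\}$ and the inequality reads $\tfrac1q\|v\|_B^{\,q}\ge c\|v\|_B^{\,q}$ (so we take $c\le\tfrac1q$). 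Granting this, I would apply it twice---with $(u,v,\xi)=(y,z,\|y\|_B^{q-1}z_1)$ and with $(u,v,\xi)=(z,y,\|z\|_B^{q-1}z_2)$---and add the two resulting inequalities; the $g$-terms cancel and one is left with
\[
  2c\,\|y-z\|_B^{\,q}\ \le\ \big\langle\,\|y\|_B^{q-1}z_1-\|z\|_B^{q-1}z_2,\ y-z\,\big\rangle
  \ \le\ \big(\|y\|_B^{q-1}\|z_1\|^*+\|z\|_B^{q-1}\|z_2\|^*\big)\,\|y-z\|
  \ \le\ 2t\,\|y-z\|,
\]
where the last two steps use $\|z_1\|^*,\|z_2\|^*\le t$ together with $\|y\|_B^{q-1},\|z\|_B^{q-1}\le1$. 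Thus $\|y-z\|_B^{\,q}\le (t/c)\|y-z\|$, which is the assertion of the lemma, and Corollary~\ref{cor:qconvex} then follows by substituting into Theorem~\ref{thm:geompr}.

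The step I expect to require the most care is the power-type uniform convexity of $g=\tfrac1q\|\cdot\|_B^{\,q}$ quoted above: this is precisely the point at which one must upgrade a statement about the unit sphere to one valid in every direction---including radial directions, along which $\|\cdot\|_B$ itself is not uniformly convex---and a fully rigorous proof calls for a short case split according to the relative sizes of $\|x\|_B$ and $\|y\|_B$. The remainder is the elementary duality computation displayed above, so the content is essentially the observation that the functionals furnished by Proposition~\ref{prop:dual}, which have dual norm $\le t$, feed directly into the subgradient (monotonicity) inequality for $g$.
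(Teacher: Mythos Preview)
Your argument is correct and is essentially the paper's second proof: both select norming functionals with $\|\cdot\|^*\le t$ from the definition of $B_t$, rescale them by $\|y\|_B^{q-1}$ to subgradients of $g=\tfrac1q\|\cdot\|_B^q$, and then invoke the equivalence of $q$-convexity with power-type strong convexity/monotonicity of $\partial g$ (the fact you flag as the nonelementary ingredient is exactly what the paper cites from \cite{Xu91}). The paper also records a shorter first proof that avoids this characterization entirely, using instead $\pi_t(y)=y$ for $y\in B_t$ to get $\|y\|_B\le\|\tfrac{y+z}{2}\|_B+t\|\tfrac{y-z}{2}\|$ and then applying the definition of $q$-convexity directly to $y/\gamma,z/\gamma$ with $\gamma=\|y\|_B\vee\|z\|_B$.
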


We will give two different proofs of this lemma.  The first proof is 
pedestrian, but perhaps not very intuitive.  The second proof is more 
intuitive, as it is close in spirit to the intuition developed in Remark 
\ref{rem:geomintu}; however, this proof requires us to use an alternative 
(but equivalent) formulation of the $q$-convexity property.

\begin{proof}[First proof]
By Proposition \ref{prop:dual}, we have
$\pi_t(y)=y$ for $y\in B_t$.  Thus
$$
	\|y\|_B = 
	\inf_u \{\|u\|_B + t\|y-u\|\}
	\le
	\bigg\|\frac{y+z}{2}\bigg\|_B +
	t \bigg\|\frac{y-z}{2}\bigg\|
$$
for any $y,z\in B_t$.
Similarly, exchanging the role of $y$ and $z$, we obtain
$$
	1 \le
	\bigg\|\frac{y+z}{2\gamma}\bigg\|_B +
	t\bigg\|\frac{y-z}{2\gamma}\bigg\|,
	\qquad
	\gamma:=\|y\|_B\vee \|z\|_B.
$$
But note that $\|y/\gamma\|_B\le 1$ and
$\|z/\gamma\|_B\le 1$ by the definition of $\gamma$.
Therefore, applying the $q$-convexity assumption to the first term
on the right yields
$$
	\|y-z\|_B^q  \le
	\frac{\gamma^{q-1}}{2\eta}\,t\|y-z\|
$$
for any $y,z\in B_t$.  The proof is completed by noting that $\gamma\le 1$.
\end{proof}

\begin{proof}[Second proof]
An equivalent characterization of the $q$-convexity property is as 
follows \cite[Corollary 1]{Xu91}: $B$ is $q$-convex if and only if
$$
	\langle j_y-j_z,y-z\rangle
	\gtrsim \|y-z\|_B^q
$$
for all $j_y\in J_y:=\{u\in X^*: \langle u,y\rangle = 
\|y\|_B^q,~\|u\|_B^*\le\|y\|_B^{q-1}\}$ and $j_z\in J_z$, where
the universal constant depends on $q,\eta$ only.  Note that $J_y$ is none 
other than the subdifferential of the map $y\mapsto\|y\|_B^q/q$ (cf.\ 
Corollary \ref{cor:grad}), so this characterization is rather intuitive: 
$B$ is $q$-convex precisely when the map $y\mapsto\|y\|_B^q$ exhibits a 
uniform improvement over the usual first-order condition for convexity.

With this formulation in hand, the lemma follows easily.  Let $y,z\in 
B_t$.  By definition of $B_t$, we can choose $u_y\in X^*$ with
$\langle u_y,y\rangle =\|y\|_B$, $\|u_y\|^*_B\le 1$, $\|u_y\|^*\le t$.
Choose $u_z\in X^*$ analogously.  Setting
$j_y = u_y\|y\|_B^{q-1}$ and $j_z=u_z\|z\|^{q-1}_B$ gives
$$
	\|y-z\|^q_B \lesssim \langle j_y-j_z,y-z\rangle
	\le \|j_y-j_z\|^*\|y-z\| \le
	2t\|y-z\|.
$$
This completes the proof.
\end{proof}

It is now trivial to complete the proof of Corollary \ref{cor:qconvex}.

\begin{proof}[Proof of Corollary \ref{cor:qconvex}]
We may as well assume that $B$ is compact: if $B$ is not precompact, the 
right-hand side of the desired inequality is infinite and there is nothing 
to prove; if $B$ is precompact, there is no loss of generality in assuming 
that it is also closed.  It remains to apply Theorem \ref{thm:geompr} 
and Lemma \ref{lem:qconvbt}.
\end{proof}

\subsection{\texorpdfstring{$\ell_q$}{lq}-balls and unconditional bases}
\label{sec:lquncond}

We have seen in the previous section that uniform convexity cannot explain 
the behavior of $\ell_q$-ellipsoids in Hilbert space that was observed in 
section \ref{sec:lqellips}.  We will presently show that this behavior is 
nonetheless a manifestation of the general geometric principle of Theorem 
\ref{thm:geompr}.  It will follow immediately that the same behavior 
persists in a much larger family of Banach spaces (but not in a setting as 
general as for $q$-convex sets).

To understand what is going on, let us take inspiration from the second 
proof of Lemma \ref{lem:qconvbt} (and from Remark \ref{rem:geomintu}).
For any $x\in X$, choose any point $j_x\in X^*$ be such that
$\langle j_x,x\rangle=\|x\|_B^q$ and $\|j_x\|_B^*\le\|x\|_B^{q-1}$.
As
$$
	\|y-z\|_B^q = \langle j_{y-z},y-z\rangle \le
	\|j_{y-z}\|^*\|y-z\|,
$$
the assumption of Theorem \ref{thm:geompr} would follow if we 
could show that $\|j_{y-z}\|^*\lesssim t$ whenever $y,z\in B_t$.  
We can always choose $\|j_x\|^*\le t$ when $x\in B_t$, but this 
does not in itself yield the desired result: $y,z\in B_t$ does not imply 
$y-z\in B_t$.  

To obtain the desired bound, we must find a relation between $j_{y-z}$ 
and $j_y,j_z$.  The $q$-convexity assumption provides the inequality 
$\langle j_{y-z},y-z\rangle \lesssim \langle j_y-j_z,y-z\rangle$, 
which is particularly convenient for this purpose.  However, this is by 
no means the only way to achieve our goal.  In the case of 
$\ell_q$-ellipsoids, we will use a completely different geometric 
property: in this case we observe that $|j_{y-z}| \lesssim 
|j_y|+|j_z|$ coordinatewise.  This simple device allows us to reach the 
same conclusion as in the $q$-convex case as long as the dual norm 
$\|\cdot\|^*$ respects the coordinatewise ordering.

We proceed to make this idea precise.  We first recall the class of 
Banach spaces that possess the desired monotonicity properties \cite[\S 
3.1]{AK06}.

\begin{defn}
\label{def:uncond}
Let $(X,\|\cdot\|)$ be a Banach space and let $\{e_n\}$ be a basis for
$X$.  The basis is said to be unconditional with constant $K$ if
$$
	\Bigg\|\sum_{n=1}^N a_ne_n\Bigg\| \le K
	\Bigg\|\sum_{n=1}^N b_ne_n\Bigg\|
$$
for all $N\in\mathbb{N}$ and scalars $a_n,b_n\in\mathbb{R}$
such that $|a_n|\le|b_n|$ for all $n$.  
\end{defn}

We recall for future reference that if $\{e_n\}$ is an unconditional basis 
in $X$ with constant $K$, then the biorthogonal sequence $\{e_n^*\}$ is an 
unconditional basic sequence in $X^*$ with the same constant $K$
\cite[Proposition 3.2.1]{AK06}.

\begin{rem}
The notion of a $K$-unconditional basis is often defined in a slightly
different way than we have done above: a basis is unconditional with 
constant $K$ if
$$
	\Bigg\|\sum_{n=1}^N \varepsilon_nb_ne_n\Bigg\| \le K
        \Bigg\|\sum_{n=1}^N b_ne_n\Bigg\|
$$
for all $N\in\mathbb{N}$, $b_n\in\mathbb{R}$, and 
$\varepsilon_n\in\{-1,+1\}$, that is, if the norm of $\sum_{n=1}^N b_ne_n$
is approximately invariant to sign changes of the coefficients $b_n$.
The more general property of Definition \ref{def:uncond} is however 
readily deduced from this alternative definition (for example, by
choosing random signs $\varepsilon_n$ such that 
$a_n=\mathbf{E}[\varepsilon_nb_n]$).
\end{rem}

In the following let $(X,\|\cdot\|)$ be a Banach space and let $\{e_n\}$ 
be an unconditional basis with constant $K$.  Fix $1<q<\infty$, and 
define the $\ell_q$-ball $B\subset X$ as follows:
$$
	B = \Bigg\{
	\sum_{i=1}^d z_i e_i : 
	\sum_{i=1}^d |z_i|^q \le 1
	\Bigg\}
$$
(our result will be independent of $d$, and therefore extends readily to 
infinite dimension).  Note that the $\ell_q$-ellipsoids 
considered in section \ref{sec:lqellips} correspond to the 
special case where $\{e_i\}$ is the standard basis in $\mathbb{R}^d$ and
$\|x\|^2 = \sum_i b_i^2x_i^2$.

\begin{cor}
\label{cor:lqelluncond}
In the setting of this section, we have
$$
        \gamma_p(B) \lesssim
        \Bigg[
        \sum_{n\ge 0} \big(2^{n/p} e_n(B)\big)^{q/(q-1)}
        \Bigg]^{(q-1)/q},
$$
where the universal constant depends on $p$, $q$, and $K$ only.
\end{cor}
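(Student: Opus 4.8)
The plan is to deduce Corollary \ref{cor:lqelluncond} from Theorem \ref{thm:geompr} by verifying the latter's hypothesis: namely, that $\|y-z\|_B^q \lesssim t\|y-z\|$ for all $y,z\in B_t$ and $t\ge 0$, with a constant depending only on $q$ and $K$. Once this is established the conclusion is immediate (after the same trivial compactness reduction as in the proof of Corollary \ref{cor:qconvex}). So the entire task is the geometric lemma, and the approach is the one flagged in the discussion preceding the statement: exploit that the dual norm $\|\cdot\|^*$ is monotone with respect to coordinatewise domination, together with the fact that for the $\ell_q$-ball the norming functional of $x$ in the appropriate sense has coordinates $\propto |x_i|^{q-1}$, which satisfy a coordinatewise triangle-type inequality.

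First I would record the relevant dual structure. Writing $x=\sum_i x_ie_i$, we have $\|x\|_B = \|(x_i)\|_q$, and I would take $j_x := \|x\|_B^{1-q}\sum_i |x_i|^{q-1}\mathrm{sign}(x_i)\,e_i^*$, which satisfies $\langle j_x,x\rangle=\|x\|_B$ and $\|j_x\|_B^* \le 1$ (since its $\ell_{q'}$-coefficient norm is $\|x\|_B^{1-q}\cdot\|x\|_B^{q-1}=1$, using $q'=q/(q-1)$). By Corollary \ref{cor:grad}/Proposition \ref{prop:dual}, $y\in B_t$ means there is such a functional with $\|z\|^*\le t$; but $j_y$ is (up to the smoothness of the $\ell_q$-ball for $1<q<\infty$) the unique norming functional, so $\|j_y\|^*\le t$ whenever $y\in B_t$. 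Next, for $y,z\in B_t$ I would form $j_{y-z}$ analogously. The key coordinatewise estimate is $|(y_i-z_i)|^{q-1} \le 2^{q-1}(|y_i|^{q-1}+|z_i|^{q-1})$ (or even just $\le |y_i|^{q-1}+|z_i|^{q-1}$ when $q\le 2$, and the cruder bound when $q>2$), which after dividing by $\|y-z\|_B^{q-1}$ and comparing against $\|y\|_B^{q-1},\|z\|_B^{q-1}\le 1$ shows that the coefficient vector of $j_{y-z}$ is coordinatewise dominated by a fixed multiple of the sum of the coefficient vectors of $j_y$ and $j_z$ (with signs adjusted). Then I invoke that $\{e_i^*\}$ is $K$-unconditional in $X^*$ (the remark after Definition \ref{def:uncond}) to conclude $\|j_{y-z}\|^* \lesssim_{q,K} \|j_y\|^* + \|j_z\|^* \le 2t$. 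Finally $\|y-z\|_B^q = \langle j_{y-z},y-z\rangle \le \|j_{y-z}\|^*\|y-z\| \lesssim t\|y-z\|$, which is exactly the hypothesis of Theorem \ref{thm:geompr}.

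I would then state this as a lemma (call it Lemma for $B_t$ under unconditional bases) and give the proof along the above lines, after which the proof of Corollary \ref{cor:lqelluncond} is one sentence: reduce to compact $B$ as in Corollary \ref{cor:qconvex}, then combine the lemma with Theorem \ref{thm:geompr}. The main obstacle I anticipate is bookkeeping the unconditionality constant correctly: one must be careful that coordinatewise domination of the coefficient \emph{vectors} (not the vectors themselves, which live in different coordinates since $j_{y-z}$, $j_y$, $j_z$ all have generally different supports) is what the $K$-unconditionality of the biorthogonal system controls, and that passing from the "sign-change" form to the "coordinate-domination" form of unconditionality (as in the remark) costs at most a further factor $K$ — a point the paper has already set up. A secondary subtlety is that when $q>2$ the elementary inequality $|a-b|^{q-1}\le 2^{q-1}(|a|^{q-1}+|b|^{q-1})$ contributes the $2^{q-1}$ factor to the constant, which is harmless since Theorem \ref{thm:geompr}'s constant is already allowed to depend on $q$; when $1<q<2$ the exponent $q-1\in(0,1)$ makes $t\mapsto t^{q-1}$ subadditive and the constant is even $1$. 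Neither the smoothness argument for uniqueness of $j_y$ nor the finite-$d$-to-infinite-$d$ passage (the bound is $d$-independent) should present real difficulty.
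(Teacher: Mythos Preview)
Your approach is essentially correct and is the same as the paper's: verify the hypothesis of Theorem~\ref{thm:geompr} via the coordinatewise inequality $|y_i-z_i|^{q-1}\le C(|y_i|^{q-1}+|z_i|^{q-1})$ together with $K$-unconditionality of the biorthogonal sequence $\{e_i^*\}$. However, your normalization is inconsistent and one step fails as written. You define $j_x$ so that $\langle j_x,x\rangle=\|x\|_B$, but then assert $\|y-z\|_B^q=\langle j_{y-z},y-z\rangle$, which requires the \emph{other} normalization $\langle j_x,x\rangle=\|x\|_B^q$ (the one used in the discussion preceding the corollary). Relatedly, your claim that the coefficient vector of $j_{y-z}$ is coordinatewise dominated by a fixed multiple of those of $j_y$ and $j_z$ is false with your normalization: take $y=(d^{-1/q},\ldots,d^{-1/q})$ and $z=y-\varepsilon e_1$; then the first coefficient of $j_{y-z}$ equals $1$ while those of $j_y,j_z$ are of order $d^{-(q-1)/q}$, so no $d$-independent constant works. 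The division by $\|y-z\|_B^{q-1}$ is the culprit.

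The fix is simply to drop the normalizing factor and work with $\tilde j_x:=\sum_i|x_i|^{q-1}\mathop{\mathrm{sign}}(x_i)\,e_i^*$, for which $\langle\tilde j_x,x\rangle=\|x\|_B^q$, the coordinatewise domination $|(\tilde j_{y-z})_i|\le 2^{(q-2)_+}(|(\tilde j_y)_i|+|(\tilde j_z)_i|)$ genuinely holds, and $\|\tilde j_y\|^*=\|y\|_B^{q-1}\|j_y\|^*\le t$ for $y\in B_t$ (here using $\|y\|_B\le 1$). Two applications of unconditionality then give $\|\tilde j_{y-z}\|^*\le 2^{1+(q-2)_+}K^2 t$, hence $\|y-z\|_B^q\le 2^{1+(q-2)_+}K^2\, t\,\|y-z\|$. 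This is exactly the paper's proof (after transferring the norm to $\mathbb{R}^d$). The compactness reduction you mention is unnecessary: $B$ is already a compact finite-dimensional body.
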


\begin{proof}
The norm $\|\cdot\|$ on $X$ can be transferred to $\mathbb{R}^d$
by defining $\|z\|:=\|\sum_{i=1}^dz_ie_i\|$ for $z\in\mathbb{R}^d$.
There is therefore no loss of generality in assuming that 
$X=\mathbb{R}^d$ with the above norm, that $\{e_i\}=\{e_i^*\}$ is the 
standard basis, and that $\|x\|_B$ is the $\ell_q$-norm on $\mathbb{R}^d$, 
as we will do in the sequel for notational simplicity. (We emphasize, 
however, that $\|\cdot\|$ is \emph{not} the Euclidean norm, so that the 
present setting does not reduce to the Euclidean setting considered
previously in section \ref{sec:lqellips}).

As $\|x\|_B$ is the $\ell_q$-norm, we can compute
$$
	\frac{\partial\|x\|_B}{\partial x_i} = 
	\frac{|x_i|^{q-1}}{\|x\|_B^{q-1}}
	\mathop{\mathrm{sign}}(x_i).
$$
By Corollary \ref{cor:grad}, we can write
$$
	B_t = \{
	x\in B:
	\| |x|^{q-1}\mathop{\mathrm{sign}}(x) \|^* \le t||x||_B^{q-1}
	\}.
$$
Now note that for any vectors
$x,y\in\mathbb{R}^d$, we have
$$
	\|x-y\|_B^q =
	\langle |x-y|^{q-1}\mathop{\mathrm{sign}}(x-y),x-y\rangle
	\le
	\||x-y|^{q-1}\mathop{\mathrm{sign}}(x-y)\|^*\|x-y\|.
$$
Moreover, as $|x-y|^{q-1} \le 
2^{(q-2)_+}(|x|^{q-1}+|y|^{q-1})$, we have
$$
	\||x-y|^{q-1}\mathop{\mathrm{sign}}(x-y)\|^* \le
	2^{(q-2)_+}K\||x|^{q-1}+|y|^{q-1}\|^* \le
	2^{1+(q-2)_+}K^2t
$$
for all $x,y\in B_t$ using the unconditional property of the dual basis
$\{e_n^*\}$.  Thus
$$
	\|x-y\|_B^q \le
	2^{1+(q-2)_+}K^2 t\|x-y\|
$$
whenever $x,y\in B_t$, and it remains to invoke Theorem \ref{thm:geompr}.
\end{proof}

We have now given two distinct explanations for the behavior of 
$\ell_q$-ellipsoids observed in section \ref{sec:lqellips}.  When $q\ge 
2$, such sets are $q$-convex and the result follows from the general 
principle described by Corollary \ref{cor:qconvex}.  In this setting, the 
result remains valid when $\|\cdot\|$ is an arbitrary norm.  When $q<2$, 
the observed behavior is described by Corollary \ref{cor:lqelluncond}, 
which exploits a more special geometric property of $\ell_q$-balls. 
In this setting, the result also remains valid for a large class of norms 
$\|\cdot\|$, but we require the additional restriction that the 
underlying basis is unconditional.  It appears that these two cases 
possess a genuinely different geometry, which is completely hidden in the 
statement of Proposition \ref{prop:lqellips}.

\subsection*{Acknowledgments}

The author would like to thank the anonymous referees for helpful 
comments that improved the presentation of this paper.


\end{document}